\def\R{\mathbb{R}}
\def\C{\mathbb C}
\def\Z{\mathbb Z}
\def\P{\mathbb P}
\def\Q{ \mathbb{Q}}
\def\Aff{{\rm Aff}}
\def\M{{\mathcal M}}
\def\F{{\mathcal F}}
\newtheorem{definition}{Definition}
\newtheorem{remark}{Remark}
\newtheorem{theorem}{Theorem} %[section]
\newtheorem{proposition}{Proposition}
\newtheorem{corollary}{Corollary}
\begin{document}

\begin{center}
{\LARGE\bf 
Smooth points of the space of plane foliations with a center
}
\\
\vspace{.25in} 
{\large {\sc Lubomir  Gavrilov}}
\footnote{Institut de Math\'{e}matiques de Toulouse, UMR 5219, Universit\'{e}  de Toulouse,  31062 Toulouse, France. {\tt lubomir.gavrilov@math.univ-toulouse.fr} }
{\large {\sc Hossein Movasati}}
\footnote{
Instituto de Matem\'atica Pura e Aplicada, IMPA, Estrada Dona Castorina, 110, 22460-320, Rio de Janeiro, RJ, Brazil,
{\tt hossein@impa.br}}
\end{center}

\begin{abstract}
We prove that a logarithmic foliation corresponding to a generic line arrangement of 
$d+1 \geq 3$ lines  in the complex plane, with pairwise natural and co-prime   residues, is a smooth point of the center set of plane foliations (vector fields) of degree $d$.  
\end{abstract}
%\tableofcontents
%\noindent 2000 MSC scheme numbers: 34C07, 34C08, 34C05
%\tableofcontents
\section{Introduction}

The present paper is a contribution to the classical center-focus problem (the problem of distinguishing between a center and a focus of a plane vector field). We consider the set of complex polynomial plane vector fields of degree at most $d$, or equivalently, affine polynomial degree $d$ foliations in $\C^2$:
    \begin{align*}
\F(d) = \left\{ \F(P(x,y)dy-Q(x,y)dx)\mid  P,Q \in \mathbb C[x,y],\ \deg(P),\deg(Q)\leq d\right\}.
    \end{align*}
We identify $\F(d)$   to the set of coefficients of the polynomials $P,Q$ (that is to say to $\C^{(d+1)(d+2)}$. 
We say that a given foliation (a point in $\F(d)$ ) has a Morse center at $p\in \C^2$, or simply a center, if it allows a local analytic first integral which has a Morse critical point at $p$. It is well-known that the Zariski closure of the set of foliations with a Morse center, the so called \emph{center set}, is an algebraic set, see \cite{LinsNeto2014, mov0}.
 We denote this center set by $\M(d) \subset \F(d)$. It has a canonical decomposition (up to a permutation) 
\begin{align}
\label{component}
\M(d)  = \cup_{i} \mathcal {\bar L}_i, \;\; \mathcal {\bar L}_i  \not \subseteq  \mathcal {\bar L}_j, i\neq j
\end{align}
into closed  irreducible algebraic varieties $\mathcal {\bar L}_i$. \emph{The center-focus problem in this setting is to describe the irreducible components   $  \mathcal {\bar  L}_i$  of the center set $\M(d)$. }
The problem is largely open, except in the quadratic case ($d=2$). It follows from the Dulac's computation   of quadratic systems with a center \cite{Dulac1923}, that  $\M(2)$ has four irreducible components, parameterised via their explicit first integrals.
In the case  $d>2$ only some irreducible components of $\M(d)$ are known. For a conjecturally complete list of cubic systems with a center we refer the reader to \cite{zola94b, zola94a,boja10,both07}.

Suppose that $\mathcal L \subset \M(d)$ is an irreducible algebraic set (algebraic variety) formed by foliations with a center. 
To show that  its Zariski closure $\mathcal {\bar L}$ is also an irreducible component of $\M(d)$, like in (\ref{component}), is a local problem. Therefore
we may choose a suitable point $\F_0  \in \mathcal L \subset \M(d)$ and compare the tangent space of $\mathcal L $ at $\F_0$ and the tangent space of $ \M(d)$ at $\F_0$. If the dimension of these spaces are the same, then the condition $ \mathcal L \not\subseteq  \mathcal L_j$ (\ref{component}) is certainly satisfied and therefore $\mathcal L$ is an irreducible component of the center set $ \M(d)$.

The computation of the tangent cone of $  \M(d)$ (even if $ \M(d)$ is not known!)
turns out to be possible by making use of the machinery of Melnikov functions, as shown by Ilyashenko \cite{il69} (in the Hamiltonian case), Movasati \cite{mov0,mov} (the case of logarithmic foliations), Zare \cite{Zare2019} (pull back foliations), Gavrilov \cite{gavr20} (centers of Abel equations). In all these cases it has been shown, that the corresponding irreducible algebraic set of systems with a center is indeed an irreducible component of $\M(d)$.

In the present paper we focus our attention to logarithmic foliations  of the form
\begin{equation}
\label{omicron2021}
\F_0 : \ \  l_1l_2\dots l_{d+1} \left( \sum_{i=1}^{d+1}\lambda_i\frac{dl_i}{l_i}\right)=0  , \ \ d\geq 2, 
\end{equation}
where $\lambda_i\in\C^*$ and $l_i=l_i(x,y)$ are  complex bivariate polynomials of degree one.
 Obviously the foliation $\F_0$ has a first integral of the form
\begin{align}
\label{first}
 l_1^{\lambda_1} l_2^{\lambda_2} \dots l_{d+1}^{\lambda_{d+1}}. 
\end{align}
In what follows we suppose that  the polynomials $l_i$ define a line arrangement without triple intersection points (a general line arrangement), and that $\lambda_i\neq 0$. The set of such foliations is denoted by ${\mathcal L}(1^{d+1})={\mathcal L}(1,1,\ldots,1)$. 
The Zariski closure ${\bar {\mathcal L}}(1^{d+1})\subset \F(d)$ is an irreducible component of the corresponding center set $ \M(d)$  \cite{mov}. If another irreducible component of $\M(d)$ is of dimension at least equal to the co-dimension of 
${\mathcal L}(1^{d+1})$, then it certainly intersects ${\bar {\mathcal L}}(1^{d+1})$. Therefore, the study of the structure of the center set in a small neighbourhood of 
${\bar {\mathcal L}}(1^{d+1})$ implies also a global information on $ \M(d)$. Note that if the foliation $\F_0$ belongs to the intersection of 
${\mathcal L}(1^{d+1})$ with another irreducible component of the center set, then $\F_0$ is a non smooth point  of $ \M(d)$. 
This motivates the following problem, which is partially solved in the paper : 
\emph{Classify the smooth points of $\M(d)$ along the irreducible component ${\mathcal L}(1^{d+1})$.}
We prove the following
\begin{theorem}
\label{main1}
Let $\lambda_i$, $i=1,\dots,d+1$, be mutually prime distinct natural numbers. Let $l_i=l_i(x,y)$ , $i=1,\dots,d+1$, be linear bivariate polynomials defining a generic line arrangement (generic means that there are no triple points). Then 
the logarithmic foliation $\F_0$ defined by (\ref{omicron2021})
is a smooth  point of the center set $\M(d)$.  
\end{theorem}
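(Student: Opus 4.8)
The plan is to compare the Zariski tangent space $T_{\F_0}\M(d)$ with the tangent space of the known component $\bar{\mathcal L}(1^{d+1})$ at $\F_0$, using the first Melnikov function to control the former. Since the residues are natural numbers, the first integral $H=l_1^{\lambda_1}\cdots l_{d+1}^{\lambda_{d+1}}$ in (\ref{first}) is a polynomial of degree $N=\sum_i\lambda_i$, and from (\ref{omicron2021}) one has $F\,\omega_0=dH$ with the polynomial integrating factor $F=\prod_i l_i^{\lambda_i-1}$, where $\omega_0=l_1\cdots l_{d+1}\sum_i\lambda_i\,dl_i/l_i$. A deformation of $\F_0$ inside $\F(d)$ is a polynomial one-form $\omega_1=P_1\,dy-Q_1\,dx$ of degree $\le d$, and the Poincar\'e--Pontryagin--Melnikov theory in the form used in \cite{mov0,mov} gives that the first variation of the holonomy of the Morse center of $\F_0$ is the abelian integral $M_1(t)=\oint_{\delta_t}F\,\omega_1$, where $\delta_t\subset\{H=t\}$ is the vanishing cycle of that center. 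Because $F\,\omega_1$ is a polynomial one-form of degree $\le N-1$, this is a genuine abelian integral attached to the polynomial $H$.

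The first reduction is the inclusion $T_{\F_0}\M(d)\subseteq\{\omega_1:\ M_1\equiv 0\}$, which is the linearized center condition built into the defining equations of the center set, together with $T_{\F_0}\bar{\mathcal L}(1^{d+1})\subseteq T_{\F_0}\M(d)$, valid since $\bar{\mathcal L}(1^{d+1})\subseteq\M(d)$. Thus it suffices to prove the single identity $\{\omega_1:\ M_1\equiv 0\}=T_{\F_0}\bar{\mathcal L}(1^{d+1})$: the three spaces then coincide, and since $\bar{\mathcal L}(1^{d+1})$ is an irreducible component through $\F_0$ whose dimension equals $\dim T_{\F_0}\bar{\mathcal L}(1^{d+1})$ (the parametrisation by lines and residues is an immersion at $\F_0$ for a generic arrangement), we obtain $\dim T_{\F_0}\M(d)=\dim_{\F_0}\M(d)$, i.e. $\F_0$ is a smooth point lying on the unique component $\bar{\mathcal L}(1^{d+1})$. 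The inclusion $\supseteq$ is a direct computation: differentiating (\ref{omicron2021}) in $\lambda_i$ and in the coefficients of $l_i$ produces explicit tangent directions $\omega_1$, and one checks that the corresponding $F\,\omega_1$ is relatively exact, so that $M_1\equiv 0$ on $\delta_t$.

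The core of the proof is the reverse inclusion, that $M_1\equiv 0$ forces $\omega_1\in T_{\F_0}\bar{\mathcal L}(1^{d+1})$, and I would argue in two stages. First, translate the vanishing of the abelian integral into a cohomological statement: using the Picard--Lefschetz formula and the monodromy of the fibration defined by $H$, whose singular fibers are controlled by the $\binom{d+1}{2}$ nodes of the generic arrangement (no triple points), show that $M_1\equiv 0$ on the vanishing cycle propagates through the monodromy orbit and forces $F\,\omega_1$ to be relatively exact, $F\,\omega_1=dA+B\,dH$ for polynomials $A,B$. This is exactly where the hypotheses on the residues enter: coprimality of the $\lambda_i$ makes $H$ primitive, so the generic fiber is connected and the monodromy acts irreducibly enough for the propagation to succeed, while pairwise coprimality controls the local monodromies at the nodes (eigenvalue ratios $\lambda_i:\lambda_j$) and rules out invariant substructures that would enlarge the kernel. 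Second, solve the equation $F\,\omega_1=dA+B\,dH$: analysing the orders of the poles of $A$ and $B$ along each line $l_i=0$ and using the bound $\deg\omega_1\le d$ together with the explicit form of $F$ and $H$, one shows that the only solutions are the combinations of $\partial_{\lambda_i}\omega_0$ and $\partial_{l_i}\omega_0$ already found, so that the kernel is precisely $T_{\F_0}\bar{\mathcal L}(1^{d+1})$.

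The main obstacle is the first stage above: proving that identical vanishing of the single abelian integral $M_1$ implies relative exactness of $F\,\omega_1$ for the \emph{non-generic} polynomial $H=\prod_i l_i^{\lambda_i}$, whose fibration is far from a Morse fibration. The generic-arrangement hypothesis keeps all singular fibers nodal, and the arithmetic hypotheses on the $\lambda_i$ are precisely what is needed to make the monodromy act transitively on the relevant vanishing cycles and to exclude the spurious relatively closed one-forms supported on the lines $l_i=0$ that would otherwise survive for non-coprime or non-primitive residues. Once relative exactness is established, the bookkeeping of poles and degrees in the second stage is routine.
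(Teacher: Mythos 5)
Your overall strategy---compare $T_{\F_0}\M(d)$ with $T_{\F_0}\bar{\mathcal L}(1^{d+1})$ via the first Melnikov function, then prove the hard inclusion in two stages (a cohomological statement obtained from the monodromy, followed by pole/degree bookkeeping)---is exactly the paper's strategy, and your reductions in the first two paragraphs are sound. The genuine gap is that your Stage 1 targets a \emph{false} statement: identical vanishing of $M_1$ does not force $F\omega_1=dA+B\,dH$ with $A,B$ polynomials. The residue-deformation directions $\omega_1=l_1\cdots l_{d+1}\sum_i\mu_i\,\frac{dl_i}{l_i}$ are counterexamples: for them $F\omega_1$ restricts on the fiber $\{H=t\}$ to $t\sum_i\mu_i\,\frac{dl_i}{l_i}$, whose integral over the center vanishing cycle $\delta_t$ vanishes (the cycle bounds a disc on which every $l_i\neq0$, so each $\frac{dl_i}{l_i}$ is exact there), hence $M_1\equiv0$; but its integral over the cycles at infinity $\delta_{j,h}$ is nonzero by \eqref{18122021toulouse}, whereas any form $dA+B\,dH$ has zero periods on \emph{every} cycle lying in a fiber. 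So these directions satisfy $M_1\equiv 0$ yet are not relatively exact. This also makes your sketch internally inconsistent: your Stage 2 expects the solutions of $F\omega_1=dA+B\,dH$ to include the directions $\partial_{\lambda_i}\omega_0$, which they cannot. The same error appears in your ``direct computation'' of the inclusion $\supseteq$, where you assert that $F\omega_1$ is relatively exact for all tangent directions.

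The reason monodromy propagation cannot yield relative exactness is that the orbit of the center vanishing cycle is never all of $H_1(L_t,\Q)$: the functionals $\gamma\mapsto\int_\gamma \frac{dl_i}{l_i}$ are monodromy-invariant (the forms are globally defined and closed on the total space minus the lines) and kill $\delta_t$, so the orbit lies in their common kernel, of codimension $d$, regardless of the arithmetic of the $\lambda_i$. The arithmetic hypotheses guarantee the orbit is exactly this kernel and not smaller; this is the paper's Theorem \ref{camacho2021}, whose proof is the technical heart of the paper (Sections \ref{8dec2021}--\ref{14.12.2021}: Picard--Lefschetz theory for $x^my^n$, the topology of the fibers, and the A'Campo/Gusein-Zade intersection computation). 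The correct Stage 1 is therefore: $M_1\equiv0$ implies the class of $\tilde\omega_1=\omega_1/(l_1\cdots l_{d+1})$ in $H^1_{dR}(L_t)$ lies in the span of the classes $\frac{dl_i}{l_i}$, i.e. $\tilde\omega_1-\sum_{i=1}^{d}\lambda_i(t)\frac{dl_i}{l_i}$ is relatively exact; one must then show the coefficients $\lambda_i(t)$ are \emph{constants} (the paper does this by a growth estimate as $t\to0$ and $t\to\infty$), a step absent from your outline. Only then does the pole/degree analysis of your Stage 2 apply, and note that it uses the hypothesis that the $\lambda_i$ are pairwise \emph{distinct} (via $l_i\mid P-Qn_i$), which your sketch never invokes. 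With the logarithmic correction term inserted and these two points repaired, your plan becomes the paper's proof; without them, Stage 1 is false and Stage 2 solves the wrong equation.
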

If $ \F_0$ is a general logarithmic foliation of the form (\refeq{omicron2021}) such that $ \F_0$ is a smooth  point of the center set $ \M(d)$, then obviously every small degree $d$ deformation with a persistent center is also a deformation by logarithmic foliations. 
Therefore the above theorem is close to another classical result which we recall now.
Consider the set $\mathcal L(d+1) \subset \M(d)$ formed by Hamiltonian foliations $\F: dH=0$ where $H$ is an arbitrary degree $d+1$ bivariate polynomial.
Suppose in addition that $H$ is a "Morse plus" polynomial (has only Morse cticical points with distinct critical values). It is proved by Ilyashenko \cite{il69}, that if in a deformation of the Morse plus Hamiltonian foliation $dH$ the center persists, then this deformation is  Hamiltonian too.  The proof implies also   that $\F: dH=0$  is a smooth point of $ \M(d)$.

It is clear that when two irreducible components of $ \M(d)$ intersect at $ \F_0$, then $ \F_0$ is a non-smooth point of $ \M(d)$. It is less known that 
even when $ \F_0$ does not belong to different irreducible components of $ \M(d)$, it can still be a non-smooth point of $ \M(d)$. This happens even in the quadratic case (d=2), for an example see the last section of the paper. 

Our final remark is that it follows from the computation of the tangent cone (which turns out to be a tangent space) 
 that ${\mathcal L}(1^{d+1})$ is an irreducible component of the center set $ \M(d)$. 
This proof is quite different compared to the original proof \cite{mov}, as the tangent cone to  $ \M(d)$ is computed at a smooth pont $\F_0$ (like in \cite{il69}) .

The article is organised in the following way. In Section \ref{8dec2021} we develop  the Picard-Lefschetz theory of the fibration  of the polynomial $ x^ny^m$ where  $n,m$ are natural numbers (not necessarily  coprime). 
 In Section \ref{14dec2021} we study the topology of the fibers  of $f$ 

\begin{equation*}
%\label{17nov2021}
f= l_1^{n_1} l_2^{n_2} \dots l_{d+1}^{n_{d+1}}. 
\end{equation*} 
where $l_i$  are lines in a general position, and $n_i$ are positive integers without common divisors. 
(we do not suppose that $n_i, n_j$ are relatively prime). 
 As a by-product we get a genus formula for the fibers of $f$.
In Section \ref{02dec2021} we generalise  a theorem due to  A'Campo and Gusein-Zade \cite{acam75,guse74} in the context of a logarithmic foliation defined by the polynomial $f$. In Section \ref{14.12.2021} we compute the orbit of a vanishing cycle under the action of the  monodromy  in the homology bundle of $f$. As a by product, this implies that the orbit of this vanishing cycle contains  the homology of the compactified  fiber. This is the only place where we use the fact that $n_i$'s are pairwise coprime. Summing up all these results leads to the proof of Theorem \ref{main1}, given in the last Section \ref{14/12/2021}. 

The article was written while the first author was visiting the University of Toulouse III. He would like to thank  for the stimulating research atmosphere, as well  for the financial support of CIMI and CNRS.

\section{The Picard-Lefschetz formula of a plane non-isolated singularity }
\label{8dec2021}
The first attempt to describe Picard-Lefschetz theory of fibrations with non-reduced fibers is done in \cite{Clemens69}, however, the main result of this paper Theorem 4.4 is not applicable in our context, and so, we elaborate Theorem \ref{12nov2021} which explicitly describe a kind of Picard-Lefschetz formula. 

In this section we consider the local fibration $f:(\C^2,0)\to(\C,0)$ given by $f=x^my^n$, where $m,n$ are two positive integers. We will use the notation:
$$
e:=(m,n),\ p:=\frac{m}{e}, q:=\frac{n}{e} 
$$
where $(m,n)={\rm gcd}(m,n)$ means the greatest common divisor of $m$ and $n$.
%and hence $p,q$ are mutually prime. 
It might be easier to follow the content of the present section for the case $e=1$. For $t\in\R^+$ let 
$$
\Gamma:=\left\{(r,s)\in\R^+\times \R^+, \Big| r^ms^n=t\right\}. $$ 
We consider it as an  oriented path in $f^{-1}(t)$ for increasing $s$ for which we use the letter $\gamma$.
 We consider the following parameterization of the fiber $f^{-1}(t)$ for $t\in\R^+$
\begin{eqnarray}
\label{207EuroMulta2021}
& &\R\times \Gamma\to f^{-1}(t),\ \  h=0,1,2,\ldots,e-1\\ \nonumber 
& &(\theta, r,s)\mapsto (x,y)=(re^{2\pi i (\theta q+\frac{h}{m})}, se^{-2\pi i \theta p}).
\end{eqnarray}
The fiber $f^{-1}(t)$  consists of $e$ cylinders indexed by $h$ and the above parametrization is periodic in $\theta$ with period $1$. 
\begin{definition}\rm
By a straight path in $f^{-1}(t)$ we mean a path which is the image of a path $\alpha$ in $\R\times\Gamma$ under the parameterization \eqref{207EuroMulta2021} and  with the following property: $\alpha$  maps bijectively to its image under the projection $\R\times\Gamma\to\R$. \end{definition}
For simplicity,  we consider the parameters with $|t|<1$ and define $L_t:=f^{-1}(t)\cap B$, where $B$ is the complex square $\{(x,y)\in \C^2 \mid |x|\leq 1,|y|\leq 1\}$. In this way, $L_t$ is a union of $e$ compact cylinders, let us say $L_t=\cup_{h=0}^{e-1}L_{t,h}$. A circle in each cylinder $L_{t,h}$ is  parameterized with fixed $(r,s)$ and  for $(r,s)=(1, |t|^{\frac{1}{n}})$ and $( |t|^{\frac{1}{m}},1)$ we get two circles of its boundary and denote  them by $\delta_{1,h}$ and $\delta_{2,h}$, respectively, and give them a natural orientation coming from $\theta\in [0,1]$ running from $0$ to $1$. We denote by $\delta_h:[0,1]\to L_{t,h}$ the closed path given by  the parameterization \eqref{207EuroMulta2021} and fixed $(r,s)$. This is homotopic to $\delta_{1,h}$ and $\delta_{2,h}$. We also denote by $\gamma_h$ the non closed path in $L_{t,h}$ given by  the parameterization \eqref{207EuroMulta2021} and $\theta=0$. Note that  $\gamma:=\gamma_0$ is the only path from $(1,t^{\frac{1}{n}})$ to $(t^{\frac{1}{m}},1)$ in the real plane $\R^2$.

We consider in $B$ two transversal sections $\Sigma_1:=\{x=1\},\Sigma_2:=\{y=1\}$ to the $x$ and $y$-axis, respectively, and define $\Sigma:=\Sigma_1\cup \Sigma_2$.  The intersections $\{|x|=1\}\cap L_t$ and $\{|y|=1\}\cap L_t$ are  the union of circles $\cup_{h=0}^{e-1}\delta_{i,h}$ for $i=1,2$ respectively,  and they have the following finite subsets 
\begin{eqnarray*}
\Sigma_1\cap L_t &=&
\left \{\zeta_{k,h},\ k=0,1 \cdots,q-1,\  h=0,1,\ldots,e-1\right \}, \\
\Sigma_2\cap L_t &=&
\left \{\xi_{l,h},\ l=0,1, \cdots,p-1,\ h=0,1,\ldots,e-1\right \},
\end{eqnarray*}
where 
\begin{equation}
\label{28nov2021}
\zeta_{k,h}:=(1,t^{\frac{1}{n}}e^{-2\pi i (\frac{km-h}{n} )}),\ \ 
\xi_{l,h}:=(t^{\frac{1}{m}}e^{2\pi i (\frac{ln+h}{m})},1).
\end{equation}
%We take a branch of $t^{\frac{1}{p}}$ and $t^{\frac{1}{q}}$ which is defined in $\C\backslash \R^-$ and is a real function in the real axis $\R^+$. 
For $\Sigma_1\cap L_t$ we have set $\theta=\frac{mk-h}{[m,n]}$ and for $\Sigma_2\cap L_t$ we have set  $\theta=\frac{nl}{[m,n]}$.
We have a natural action of the multiplicative group of $n$-th (resp. $m$-th) roots of unity on the set $\Sigma_1\cap L_t$ (resp. $\Sigma_2\cap L_t$) which is given by multiplication in the second coordinate. 
\begin{figure}
\begin{center}
\includegraphics[width=0.5\textwidth]{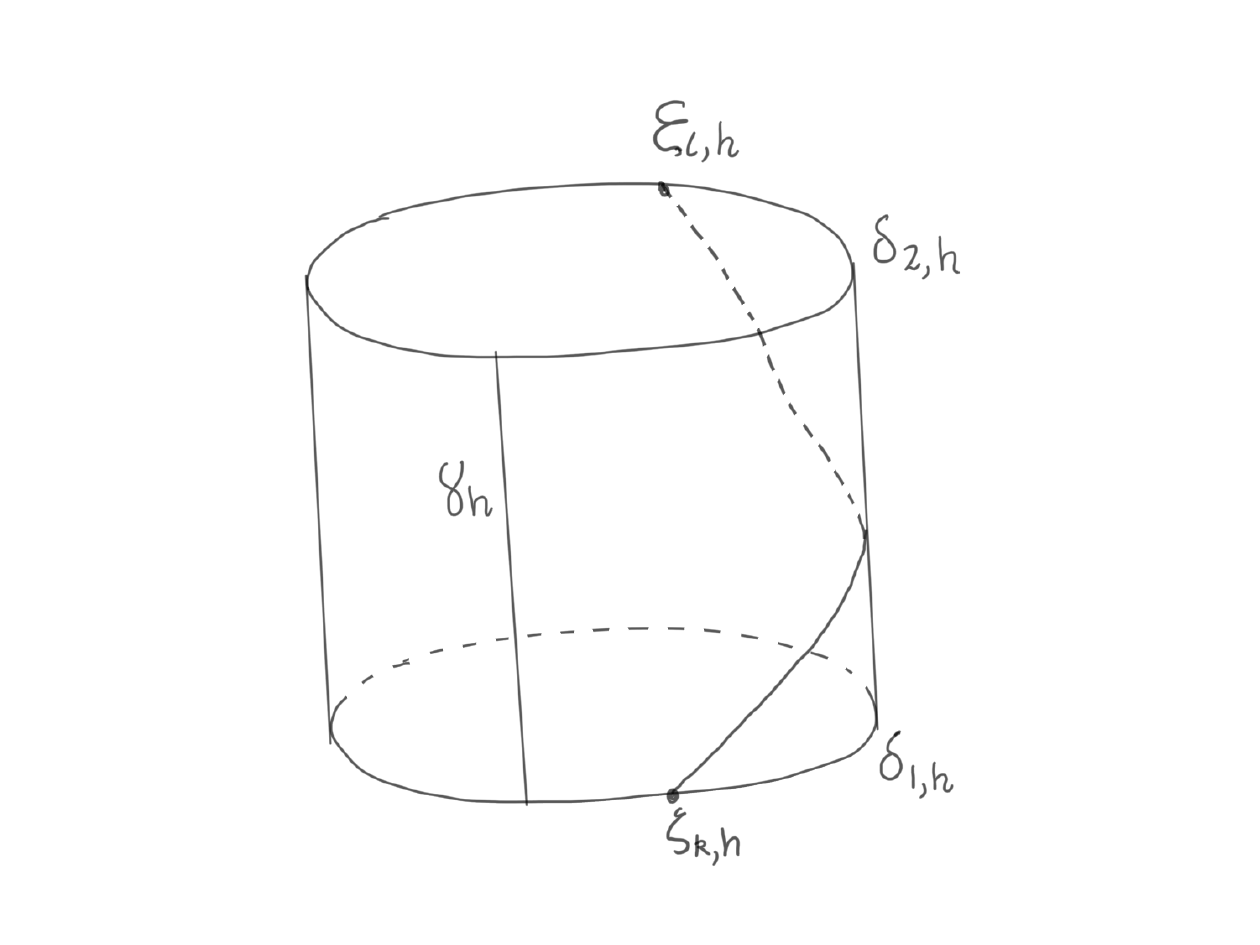}
\caption{A cylinder and straight path}. 
\label{16122021-1}
\end{center}
\end{figure}
\begin{proposition}
\label{10nov2021}
The relative homology group  $H_1(L_t, L_t\cap\Sigma;\Z)$ is freely generated $\Z$-module of rank $n+m$. 
\end{proposition}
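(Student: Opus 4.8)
The plan is to exploit the cylinder structure of $L_t$ to reduce the computation to a single cylinder, and then to run the long exact sequence of a pair. Since $L_t=\cup_{h=0}^{e-1}L_{t,h}$ is a disjoint union of $e$ compact cylinders, and since by \eqref{28nov2021} the finite set $L_t\cap\Sigma$ meets the $h$-th cylinder $L_{t,h}$ in exactly the $q$ points $\zeta_{k,h}$ (lying on $\delta_{1,h}$) together with the $p$ points $\xi_{l,h}$ (lying on $\delta_{2,h}$), the relative homology splits as a direct sum
\begin{equation*}
H_1(L_t,\,L_t\cap\Sigma;\Z)=\bigoplus_{h=0}^{e-1}H_1\!\left(L_{t,h},\,L_{t,h}\cap\Sigma;\Z\right).
\end{equation*}
Hence it suffices to treat one compact cylinder $C:=L_{t,h}$ carrying a marked set $F:=C\cap\Sigma$ of $p+q$ points, namely $q$ of them on one boundary circle and $p$ on the other.

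Next I would run the long exact sequence of the pair $(C,F)$. The cylinder $C$ deformation retracts onto its core circle $\delta_h$, so $H_1(C;\Z)=\Z\langle\delta_h\rangle$ and $H_0(C;\Z)=\Z$, while the finite set $F$ gives $H_1(F;\Z)=0$ and $H_0(F;\Z)=\Z^{p+q}$. The relevant portion of the sequence is
\begin{equation*}
0\longrightarrow H_1(C;\Z)\longrightarrow H_1(C,F;\Z)\stackrel{\partial}{\longrightarrow} H_0(F;\Z)\stackrel{j}{\longrightarrow} H_0(C;\Z).
\end{equation*}
Here $j$ is the augmentation sending each marked point to the generator; it is surjective (as $F\neq\emptyset$) with kernel $\tilde H_0(F;\Z)\cong\Z^{p+q-1}$, and the left arrow is injective because $H_1(F;\Z)=0$. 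Since $\mathrm{im}\,\partial=\ker j$, this yields a short exact sequence $0\to\Z\to H_1(C,F;\Z)\to\Z^{p+q-1}\to 0$, which splits because the quotient is free. Therefore $H_1(C,F;\Z)$ is a free $\Z$-module of rank $p+q$.

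Summing over $h=0,\dots,e-1$ then gives a free $\Z$-module of rank $e(p+q)=ep+eq=m+n$, as claimed. Beyond the abstract rank, I would record an \emph{explicit} basis of each summand, since this is what the later Picard--Lefschetz and monodromy computations will require: take the core circle $\delta_h$ (generating the image of $H_1(C)$) together with $p+q-1$ straight paths joining the marked points of $F$ (splitting the projection onto $\tilde H_0(F)$), the path $\gamma=\gamma_0$ of the excerpt being one of them. I do not expect a genuine obstacle here; the content lies entirely in the bookkeeping, i.e.\ in reading off from the parameterization \eqref{207EuroMulta2021} and \eqref{28nov2021} that each cylinder carries precisely $q$ marked points on $\delta_{1,h}$ and $p$ on $\delta_{2,h}$, and in checking that the chosen circles and straight paths indeed form a basis rather than merely computing the rank.
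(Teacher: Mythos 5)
Your proof is correct and takes essentially the same approach as the paper: the paper runs the long exact sequence of the pair $(L_t, L_t\cap\Sigma)$ on all $e$ cylinders at once, obtaining $0\to\Z^e\to H_1(L_t,L_t\cap\Sigma;\Z)\to\Z^{m+n}\to\Z^e\to 0$, whereas you run the identical sequence cylinder-by-cylinder and sum the ranks $e(p+q)=m+n$, which is the same computation. Your additional remark about an explicit basis (core circles plus straight paths) is a harmless extra not required for the statement.
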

\begin{proof}
This follows from the long exact sequence in homology of the pair $L_t, L_t\cap\Sigma$:
$$
\begin{array}{cccccccccccc}
0 &\to & H_1(L_t)&\to& H_1(L_t, \Sigma\cap L_t)&\to& H_0(\Sigma\cap L_t ) &\to & H_0(L_t)&\to& 0& \\
&& \parallel && \parallel
&&
\parallel
&&
\parallel
&&&
\\
&& \Z^e && \Z^{m+n}&& \Z^{m+n} &&
\Z^e
&&&
\\
\end{array}
$$
\end{proof}
Since $p$ and $q$ are coprime positive integers, we can find $a,b\in\Z$ such that 
$$
ap-bq=1,\ \ 0\leq a\leq q-1,\ \ 0\leq b\leq p-1,
$$
for $p,q\geq 2$. Equivalently, $am-bn=e$. We also consider the cases: 
$$
\left\{\begin{array}{ll}
a=1,\ b=0     &  \hbox{ if } p=1 \\
 a=1, b=p-1   &           \hbox{ if } q=1   
\end{array}\right. .
$$
If we change the order of $p$ and $q$ we only need to replace $a$ and $b$ with $q-a$ and $p-b$, respectively.
\begin{theorem}
\label{12nov2021}
Let $\gamma$ be a straight path which connects $\zeta_{k,h}\in L_t\cap \Sigma_1$ to $\zeta_{l,h}\in L_t\cap \Sigma_2$.% and its complement in $L_t$ is simply connected. 
The anticlockwise monodromy  $h(\gamma)$ of $\gamma$ around $t=0$ is a straight path which connects
$$
\left\{
\begin{array}{cc}
\zeta_{k,h+1} \hbox{ to } \xi_{l,h+1}    &  \hbox{ if }\ \ \ h+1<e,\\
\zeta_{k-a,0}   \hbox{ to } \xi_{l-b,0}  &  \hbox{ if }\ \ \ h+1=e.
\end{array}
\right. \ \ \ 
$$ 
In particular, we have the classical Picard-Lefschetz formula
\begin{equation}
\label{carcassone2021}
h^{[m,n]}(\gamma)=\gamma+\delta,
\end{equation}
where $[m,n]$ is the lowest common multiple of $m$ and $n$. 
\end{theorem}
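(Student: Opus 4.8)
The plan is to represent the monodromy by an explicit fibrewise isotopy adapted to the two sections $\Sigma_1,\Sigma_2$, and then to read its effect directly off the parameterization \eqref{207EuroMulta2021}. The naive candidate, the $\C^*$-action $(x,y)\mapsto(xe^{2\pi i\alpha\tau},ye^{2\pi i\beta\tau})$ with $m\alpha+n\beta=1$, covers the loop $t\mapsto te^{2\pi i\tau}$ but does \emph{not} preserve $\Sigma$, so it is useless for the relative homology $H_1(L_t,L_t\cap\Sigma)$. Instead I would look for a flow
\[
\Phi_\tau(x,y)=\bigl(x\,e^{2\pi i\tau A},\,y\,e^{2\pi i\tau D}\bigr),\qquad A=A(|x|,|y|),\ D=D(|x|,|y|)\in\R,
\]
with $mA+nD\equiv 1$ (so that $f\circ\Phi_\tau=e^{2\pi i\tau}f$), subject to $A\equiv 0$ on $\{|x|=1\}$ and $D\equiv 0$ on $\{|y|=1\}$. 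These conditions force $D=\tfrac1n$ on $\Sigma_1$ and $A=\tfrac1m$ on $\Sigma_2$, and they are compatible on every fibre $L_t$ with $|t|<1$, since there $|x|^m|y|^n=|t|<1$ keeps us off the forbidden corner $|x|=|y|=1$; on such a fibre $(|x|,|y|)$ runs over $\Gamma$, so $A,D$ are merely an interpolation along $\Gamma$ between the two ends. Then $\Phi_\tau$ preserves $\Sigma$ fibrewise, $\Phi_1$ is a geometric monodromy of the pair, and $h(\gamma)=\Phi_1(\gamma)$.

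Because $\Phi_\tau$ multiplies $x,y$ by unit factors, it fixes $(|x|,|y|)=(r,s)$ and hence preserves $\Gamma$; substituting \eqref{207EuroMulta2021} I would find that $\Phi_1$ acts by $h\mapsto h+1\pmod e$ and $\theta\mapsto\theta-D/p$, so in particular it carries straight paths to straight paths and $h(\gamma)$ is again straight. It remains to locate its endpoints. On $\Sigma_1$ the flow is $y\mapsto ye^{2\pi i/n}$, sending the point of index $km-h$ to that of index $km-h-1$; if $h+1<e$ this is $\zeta_{k,h+1}$, while if $h+1=e$ one rewrites $km-e\equiv(k-a)m\pmod n$ using $am-bn=e$, i.e. $am\equiv e\pmod n$, to recognize $\zeta_{k-a,0}$. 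Symmetrically, on $\Sigma_2$ the flow is $x\mapsto xe^{2\pi i/m}$, sending $\xi_{l,h}$ to $\xi_{l,h+1}$ when $h+1<e$ and to $\xi_{l-b,0}$ when $h+1=e$, now using $-bn\equiv e\pmod m$. This yields precisely the two cases of the statement.

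For the Picard–Lefschetz formula \eqref{carcassone2021} I would iterate. Since $[m,n]=pqe$, the cylinder index returns, $h\mapsto h+[m,n]\equiv h\pmod e$, so the iterate $\Phi_{[m,n]}=\Phi_1^{[m,n]}$ is a self-map of the cylinder $L_{t,h}$. On its two boundary circles it is the identity: there $y\mapsto ye^{2\pi i[m,n]/n}=ye^{2\pi i p}=y$ on $\Sigma_1$ and $x\mapsto xe^{2\pi i[m,n]/m}=xe^{2\pi i q}=x$ on $\Sigma_2$. In the interior the angular coordinate is dragged by $\theta\mapsto\theta-[m,n]D/p$, which equals $-1$ on the inner boundary ($D=\tfrac1n$) and $0$ on the outer boundary ($D=0$). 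Hence $\Phi_{[m,n]}$ restricts to a Dehn twist of $L_{t,h}$ about its core $\delta=\delta_h$, and a Dehn twist acts on a transversal arc exactly by $\gamma\mapsto\gamma+\delta$, which is \eqref{carcassone2021}.

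The routine parts are the substitutions into \eqref{207EuroMulta2021}; the delicate steps, where I expect the real work to lie, are (i) confirming that the isotopy can be chosen adapted to $\Sigma$ while covering the full loop, i.e. dealing with the corner obstruction $|x|=|y|=1$, and (ii) the index bookkeeping at the cylinder transition $h=e-1\rightsquigarrow 0$, where the integers $a,b$ of the congruence $am-bn=e$ enter and must be matched against the labelling of $\zeta_{k,h},\xi_{l,h}$. Pinning the twist coefficient to exactly $+1$, rather than to a nonzero multiple of $\delta$, is what the Dehn-twist description in the last paragraph secures.
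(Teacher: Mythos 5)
Your proposal is correct, but it reaches the theorem by a genuinely different mechanism than the paper. The part you share with the paper is the endpoint bookkeeping: both arguments track the points of $\Sigma_i\cap L_t$ under one turn of $t$ via the rotations $y\mapsto ye^{2\pi i/n}$ on $\Sigma_1$ and $x\mapsto xe^{2\pi i/m}$ on $\Sigma_2$, and use $am-bn=e$ to re-index at the transition from $h=e-1$ to $h=0$. Where you differ is in how the homotopy class (the winding) of $h(\gamma)$ is pinned down. The paper does this analytically: it integrates the logarithmic form $\omega=m\frac{dx}{x}=-n\frac{dy}{y}$ over straight paths, notes $\int_{\delta}\omega=2\pi i[m,n]$, shows via Cauchy's theorem that one monodromy turn increases $\int\omega$ by exactly $2\pi i$, and obtains \eqref{carcassone2021} by iterating and invoking $ap-bq=1$. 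You instead construct an explicit $\Sigma$-preserving geometric monodromy $\Phi_\tau$ (the corrected $\C^*$-action with $mA+nD=1$, $A=0$ on $\{|x|=1\}$, $D=0$ on $\{|y|=1\}$) and read everything off the parameterization; since $A,D$ depend only on $(|x|,|y|)$, which the flow preserves, $\Phi_\tau$ is a one-parameter group, so $h^{[m,n]}$ is represented by $\Phi_{[m,n]}$, which is the identity on both boundary circles of each cylinder and drags $\theta$ by $-1$ across it, i.e.\ a single Dehn twist about the core, whence $\gamma\mapsto\gamma+\delta$ with the correct sign. Your route buys two things: it makes explicit why the monodromy can be chosen to preserve $\Sigma$ at all (which is needed for it to act on $H_1(L_t,L_t\cap\Sigma)$ and is left implicit in the paper), and it yields the coefficient $+1$ in \eqref{carcassone2021} without iterating the index arithmetic; the paper's route, in exchange, produces the integral formulas for $\omega$ in a form that is reused in the later residue computations (e.g.\ in Theorem \ref{camacho2021}). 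One point you should fix in the write-up: the flow does \emph{not} literally carry straight paths to straight paths, because the shift $\theta\mapsto\theta-\tau D/p$ varies along the path, so injectivity of the $\theta$-projection can fail; what is true, and is all you need, is that the image path has the computed lifted endpoints in $\R\times\Gamma$, hence is homotopic rel endpoints to the straight path you name, and monodromy is only defined up to such homotopy anyway.
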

\begin{proof}
We consider the differential form $\omega:=m\frac{dx}{x}=-n\frac{dy}{y}$ in $L_t$, where the last equality is written restricted to $L_t$.  We observe that
\begin{eqnarray*}
\int_{\gamma_h}\omega &=& \ln(t),\\
\int_{\delta(\theta)}^{\delta(\theta+\alpha)}\omega &=& 2\pi i [m,n]\alpha, \ \ \hbox{ and hence } \int_{\delta}\omega  =2\pi i [m,n].
\end{eqnarray*}
%The second formula implies that 
Actually, in the first formula $\gamma$ can be any path with parametrization in \eqref{207EuroMulta2021} with fixed $\theta$. 
We have
\begin{equation}
\label{24nov2022-1}
\zeta_{k,h}e^{2\pi i\frac{1}{n}}=
(1,e^{-2\pi i (\frac{km-(h+1)}{n} )})=
\left\{
\begin{array}{cc}
\zeta_{k,h+1}     &  \hbox{ if }\ \ \ h+1<e\\
(1,e^{-2\pi i\frac{(k-a)m}{n}})=\zeta_{k-a,0}    &  \hbox{ if }\ \ \ h+1=e
\end{array}
\right. 
\end{equation}
and 
\begin{equation}
\label{24nov2022-2}
\xi_{l,h}e^{2\pi i\frac{1}{m}}=
(e^{-2\pi i (\frac{ln+(h+1)}{m} )},1)=
\left\{
\begin{array}{cc}
\xi_{l,h+1}     &  \hbox{ if }\ \ \  h+1<e\\
(1,e^{-2\pi i\frac{(l-b)n}{m}})=\xi_{l-b,0}    &  \hbox{ if }\ \ \ h+1=e
\end{array}
\right. 
\end{equation}
For the equalities in the case $h+1=e$ we have used $e=am-bn$.  The above equalities imply that $h(\gamma)$ has the right starting and end points as announced in the theorem. 
By Cauchy's  theorem we have 
$$
\int_{\gamma}\omega=\int_{ \gamma_h}\omega+2\pi i[m,n]\frac{nl}{[m,n]}-2\pi i[m,n]\frac{mk-h}{[m,n]}=
\int_{\gamma_h}\omega+2\pi i(nl-mk+h).
$$
Now, we consider a straight path $\tilde \gamma$ in $L_{t,h+1}$ which connects \eqref{24nov2022-1}  to \eqref{24nov2022-2}. A similar formula  for $\tilde\gamma$ as above, and knowing that  $\int_{\gamma_h}\omega=\int_{\gamma_{h+1}}\omega=\ln(t)$ give us  $$
\int_{\tilde \gamma}\omega=
\int_{\gamma}\omega+2\pi i
$$
which implies $h(\gamma)=\tilde \gamma$ for $h+1<e$. For $h+1=e$ this follows from 
$$
\int_{\tilde \gamma}\omega=\int_{ \gamma_0}\omega+2\pi i[m,n]\frac{n(l-b)}{[m,n]}-2\pi i[m,n]\frac{m(k-a)}{[m,n]}=
\int_{\gamma_0}\omega+2\pi i(nl-mk+e).
$$
As a corollary we can get the classical formula for the monodromy $h^{[m, n]}(\gamma)$ in \eqref{carcassone2021}. We know that $h^{e}(\gamma)$ is the straight path connecting $\zeta_{k-a,0}$ to $\xi_{l-b,0}$, and so its $pq$ times iteration is the straight path connecting $\zeta_{k-pqa,0}$ to $\xi_{l-pqb,0}$.  Since $ap-bq=1$ we get  \eqref{carcassone2021}.
\end{proof}
\begin{figure}[t]
\begin{center}
\includegraphics[width=0.5\textwidth]{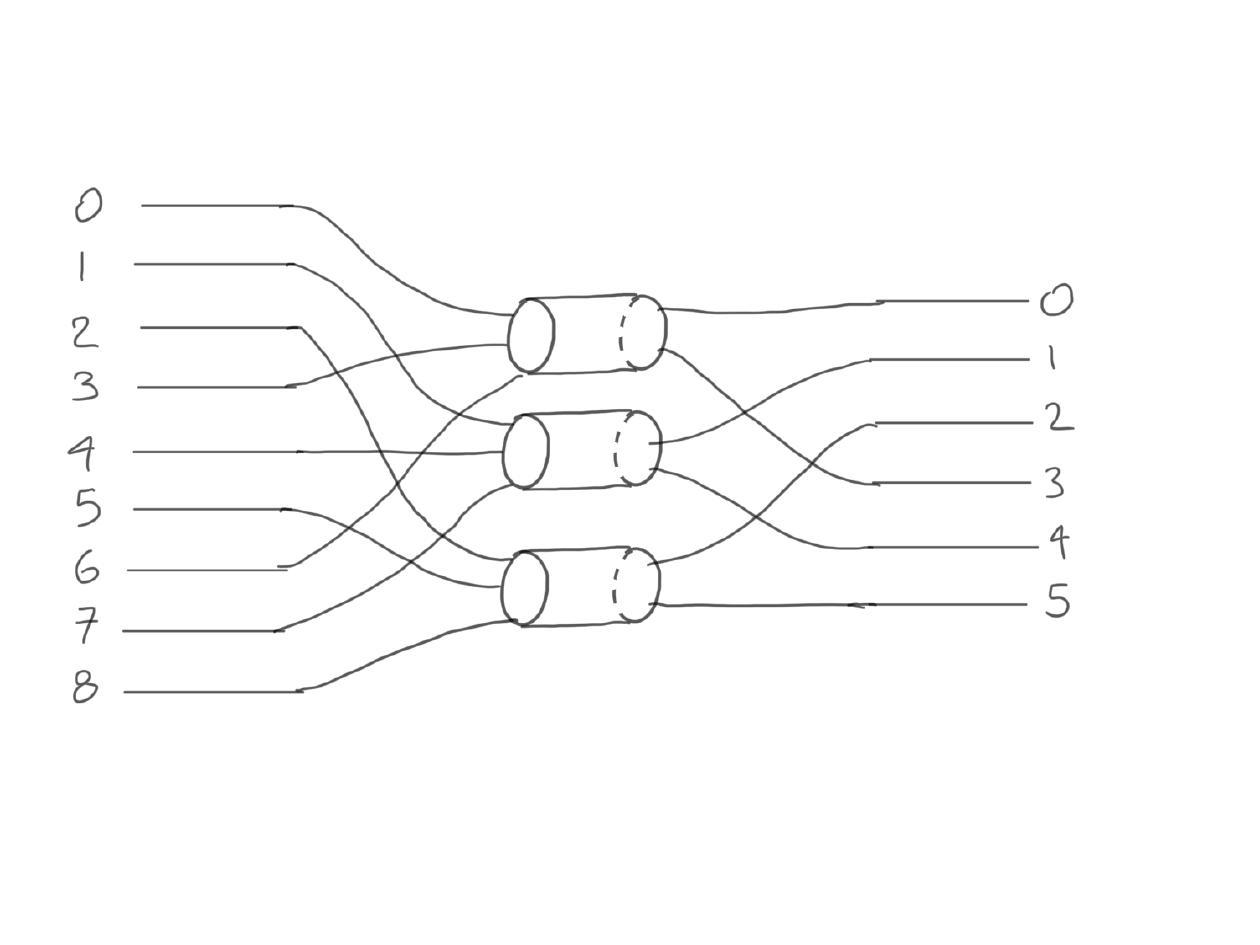}
\caption{A passage from one transversal section to another: $n=9, m=6$}. 
\label{16122021-2}
\end{center}
\end{figure}
In order to make the content of this section more accessible for applications we have made Figure \ref{17122021}, and an example of it in Figure \ref{16122021-2}, which shows the deformation retract of $L_t$ for which one can describe the action of monodromy. The points in $\Sigma_i\cap L_t,\ \ i=1,2$ are ordered according to the usual order of roots of unity and we identify them with  $\Sigma_1:=\{0,1,2,\ldots, n-1\}$ and $\Sigma_2:=\{0,1,2,\ldots,m-1\}$, respectively.  In this way 
\begin{eqnarray*}
 \Sigma_1\cap L_{t,h} &=&\{h,e+h,2e+h,\ldots,(q-1)e+h\},\\
 \Sigma_2\cap L_{t,h} &=& \{h,e+h,2e+h,\ldots,(p-1)e+h\}.
\end{eqnarray*}
In $\Sigma_1\cap L_{t,h}$ and $\Sigma_2\cap L_{t,h}$ we take minus $h$ and divide by $e$ and connect them to  $\Sigma_{1,h}:=\{0,1,2,\ldots, q-1\}$ and $\Sigma_{2,h}:=\{0,1,\ldots,p-1\}$, respectively.  We consider another copy $\Sigma_{1,h}'$ of $\Sigma_{1,h}$ and connect $x\in\Sigma_{1,h}$ to $x(-p)^{-1}\in \Sigma_{1,h}'$ modulo $q$ and another  copy $\Sigma_{2,h}'$ of $\Sigma_{2,h}$ connecting $x\in \Sigma_{2,h}$ to $x q^{-1}\in \Sigma_{2,h}'$ modulo $p$. Now, all the points of $\Sigma_{i,h}',\ i=1,2$ are connected to a single point $p_h$ for which we also consider a loop $\delta_h$ at $p_h$ with orientation. We now describe the monodromy. Consider a path $\gamma$ from $ie+h\in \Sigma_1$ to $je+h$ which turns in $\delta_h$, $s_\gamma\in \Z$ times. If $h<e-1$ the monodromy $h(\gamma)$ of $\gamma$ is a similar path starting from $ie+h+1$ and $je+h+1$ and turning in the loop $\delta_{h+1}$, $s_{\gamma}$ times. If $h=e-1$ then $h(\gamma)$ starts from $ie$ and ends in $jb$. If $\gamma$ passes through $k\in \Sigma_{1,e-1}'$ and $l\in \Sigma_{2,e-1}'$  then $h(\gamma)$ passes through  $k-a\in \Sigma_{1,0}'$ and $l-b\in \Sigma_{2,0}'$. The  number of turns in $\delta_0$ of $h(\gamma)$ is $s_\gamma+\left[\frac{k-a}{q}\right]+\left[\frac{l-b}{p}\right]$. 
\begin{figure}
\begin{center}
\includegraphics[width=0.6\textwidth]{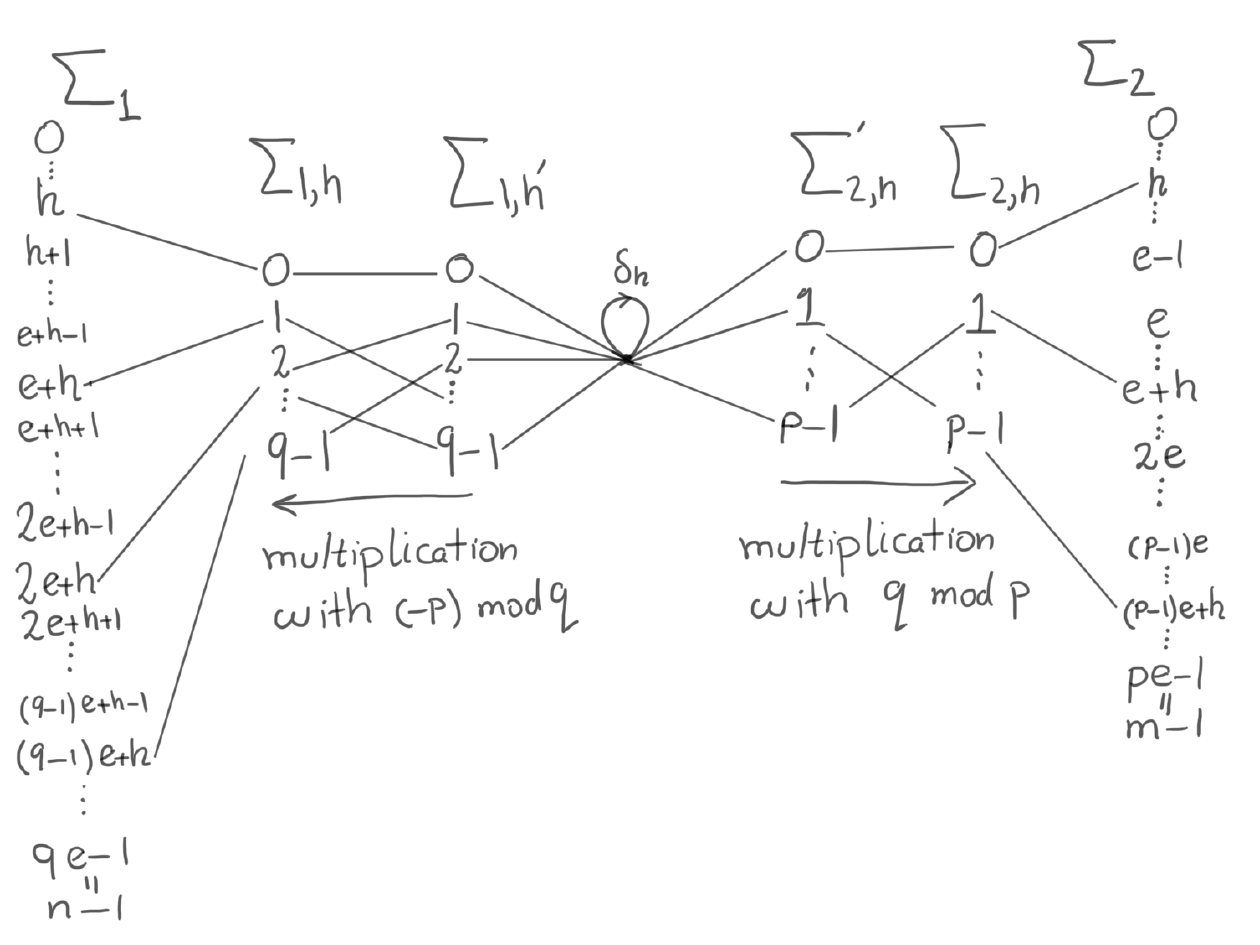}
\caption{The passage from $\Sigma_1$ to $\Sigma_2$.} 
\label{17122021}
\end{center}
\end{figure}

\section{Product of $d+1$ lines in general position}
\label{14dec2021}
We consider the polynomial 
\begin{equation}
\label{17nov2021}
f= l_1^{n_1} l_2^{n_2} \dots l_{d+1}^{n_{d+1}}. 
\end{equation} 
where $l_i$  are lines in a general position, and $n_i$ are positive integers without common divisors. 
We do not suppose that $n_i, n_j$ are relatively prime. 
Let 
$$
L_t = \{(x,y)\in \C : f(x,y) =t\}.
$$
\begin{theorem}
\label{1dec2021-1}
If  $t$ is a regular value of $f$, then
\begin{align}
\label{f7}
H_1(L_t, \Z)\cong \Z^{(d-1)(n_1+n_2+\cdots+n_{d+1})+1}.
\end{align}
\end{theorem}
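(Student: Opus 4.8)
The plan is to read off the rank of $H_1(L_t,\Z)$ from the Euler characteristic, using that $L_t$ is a smooth affine curve. Since $t$ is a regular value, $L_t$ is a smooth (hence Stein) open curve, so $H_2(L_t,\Z)=0$ and $H_1(L_t,\Z)$ is free; if in addition $L_t$ is connected, its rank equals $1-\chi(L_t)$. Everything therefore reduces to two points: that $L_t$ is connected, and that $\chi(L_t)=-(d-1)(n_1+\cdots+n_{d+1})$. Because all regular fibres are diffeomorphic, I am free to take $|t|$ arbitrarily small; then on any bounded region disjoint from the arrangement $|f|$ is bounded away from $0$, so $L_t$ is confined to an arbitrarily small neighbourhood of the central fibre $L_0=\{l_1\cdots l_{d+1}=0\}$ together with a neighbourhood of infinity in the directions of the lines.

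I would then compute $\chi(L_t)$ by decomposing $L_t$ according to the stratification of $L_0$ into its $\binom{d+1}{2}$ nodes $P_{ij}=\{l_i=l_j=0\}$ and the punctured affine lines $l_i^{\circ}=l_i\setminus\{\text{the $d$ nodes on }l_i\}$. Over a small ball $B_{ij}$ around a node one has $f=l_i^{n_i}l_j^{n_j}\,u$ with $u$ a non-vanishing holomorphic unit, so in the local coordinates $(l_i,l_j)$ the piece $L_t\cap B_{ij}$ is precisely the local fibre of $x^{n_i}y^{n_j}$ analysed in Section \ref{8dec2021}, namely $\gcd(n_i,n_j)$ disjoint cylinders, whence its Euler characteristic is $0$. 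Over a tube around $l_i^{\circ}$ one has $f=l_i^{n_i}g_i$ with $g_i=\prod_{j\neq i}l_j^{n_j}$ finite and non-zero, so solving $l_i^{n_i}=t/g_i$ exhibits this piece as an unramified $n_i$-sheeted covering of $l_i^{\circ}$; since $\chi(l_i^{\circ})=1-d$, it contributes $n_i(1-d)$. These locally closed pieces partition $L_t$ (the non-compact ends of the line pieces account for the points of $L_t$ tending to the points at infinity $p_i$ of the $l_i$), and additivity of the (compactly supported) Euler characteristic gives
\begin{equation*}
\chi(L_t)=\sum_{i=1}^{d+1}n_i(1-d)=-(d-1)(n_1+\cdots+n_{d+1}).
\end{equation*}

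It remains to prove connectedness, and this is the only step where the coprimality hypothesis enters the count. I would argue through the covering above: over $l_{d+1}^{\circ}$ the fibre is the $n_{d+1}$-sheeted cover $l_{d+1}^{n_{d+1}}=t/g_{d+1}$, and looping around the node $P_{i,d+1}$ the function $g_{d+1}$ winds $n_i$ times about $0$, so the monodromy shifts the $n_{d+1}$ sheets cyclically by $n_i$ modulo $n_{d+1}$. The subgroup of $\Z/n_{d+1}$ generated by $n_1,\dots,n_d$ is all of $\Z/n_{d+1}$ precisely when $\gcd(n_1,\dots,n_{d+1})=1$, so this covering is connected; every other line piece is joined to it through the cylinders over the nodes, and hence $L_t$ is connected. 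Combining the two computations yields $\operatorname{rank}H_1(L_t,\Z)=1-\chi(L_t)=(d-1)(n_1+\cdots+n_{d+1})+1$. The main technical obstacle is the rigorous justification of the decomposition: one must set up tubular neighbourhoods compatible with the degeneration $t\to0$ and check that for small $t$ the fibre meets no other part of $\C^2$, in particular controlling its behaviour near the line at infinity so that the non-compact ends of the line pieces are counted exactly once and nothing escapes.
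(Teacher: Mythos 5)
Your proof is correct, but it takes a genuinely different route from the paper's. The paper fibres $L_t$ over the $x$-axis by the projection $\pi(x,y)=x$ and, in Lefschetz style, decomposes the relative homology $H_1(L_t,\Sigma;\Z)$ (where $\Sigma$ is a regular fibre of $\pi$) into the direct sum of local contributions $H_1(\pi^{-1}(D_{ij}),\pi^{-1}(b_{ij});\Z)$, one for each node $l_i\cap l_j$; each summand is computed from the local model $x^{n_i}y^{n_j}$ via Proposition \ref{10nov2021}, giving $H_1(L_t,\Sigma)\cong\Z^{d(n_1+\cdots+n_{d+1})}$, and the long exact sequence of the pair then yields the rank. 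You instead compute the Euler characteristic by degenerating $t\to 0$ and partitioning $L_t$ along the strata of $L_0$ (cylinders over the nodes, $n_i$-sheeted coverings over the punctured lines), using additivity of $\chi_c$, freeness of $H_1$ from the Stein property, and $\operatorname{rank}H_1=1-\chi$. Both arguments consume the same local input, namely the $\gcd(n_i,n_j)$-cylinder description of the fibre of $x^{n_i}y^{n_j}$ from Section \ref{8dec2021}, and both reduce to $|t|$ small (a reduction resting on local triviality of $f$ away from its critical values, which the paper asserts in Section \ref{14.12.2021}); your remaining tubular-neighbourhood bookkeeping is of the same order of technicality as the excision and retraction arguments the paper delegates to its references. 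What your route loses is structure: the paper's direct-sum decomposition is reused later to build the deformation retract $G$ (Proposition \ref{11dec2021}) and to organize the saddle and centre vanishing cycles for the monodromy computations of Section \ref{14.12.2021}, whereas an Euler-characteristic count gives only the Betti number. What it gains is connectedness: the paper needs $H_0(L_t)=\Z$ in its exact sequence but justifies it only by the assertion (in the proof of Corollary \ref{1dec2021-2}) that $\gcd(n_1,\dots,n_{d+1})=1$ makes $f-t$ irreducible, while you prove it directly through transitivity of the covering monodromy generated by the shifts $n_i \bmod n_{d+1}$ — which also makes visible exactly where the gcd hypothesis enters the count.
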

\begin{proof}%\marginpar{\tiny Lubomir: Your version of the proof is at the end of the text}
We fix a fiber $X:=L_t$ with $t$ near to zero, consider the projection in $x$ coordinate  $\pi: X\to\C, \ (x,y) \to x$ and assume that the  parallel lines $x=$constant are transversal   to lines $l_i$ and any two intersection points of $l_i$'s have not the same $x$-coordinate.
It turns out that the set of critical points of $\pi$ is a union of  $\frac{d(d+1)}{2}$ sets $P_{ij}$ which is  near to $l_i\cap l_j$. Let $C_{ij}=\pi(P_{ij})$ and consider a regular point $b\in\C$ for $\pi$ and  $\Sigma:=\pi^{-1}(b)$. This  is a union of  $\sum_{i=1}^{d+1} n_i$ distinct points. Let also $D_{ij}$ be a small disc around $C_{ij}$ and $b_{ij}$ be a point in its boundary. A classical argument in the topology of algebraic varieties involving deformation retracts and excision theorem, see for instance \cite[5.4.1]{lam81},\cite[Section 6.7]{ho13},  gives us:
\begin{align}
\label{directsum}
H_1(X, \Sigma,  \Z) = \oplus_{ij} H_1( \pi^{-1}(D_{ij}), \pi^{-1}(b_{ij}),\Z) .
\end{align}
 Now $\pi^{-1}(D_{ij})$ is a union of ${}(n_i,n_j)$ cylinders with $n_i+n_j$ points 
from $\pi^{-1}(b_{ij})$ in its boundary (as in Section \ref{8dec2021}) and $(\sum_{k=1}^{d+1} n_k)-n_i-n_j$ discs, each one  with one point from $\pi^{-1}(b_{ij})$ in its boundary. Using Proposition \ref{10nov2021} we conclude that 
$$
H_1(X, \Sigma)\cong \Z^{d(n_1+n_2+\cdots+n_{d+1})} .
$$
The  long exact sequence of the pair $X,\Sigma$ finishes the proof.   
$$
\begin{array}{cccccccccc}
0 &\to & H_1(L_t)&\to& H_1(L_t, \Sigma)&\to& H_0(\Sigma )\to H_0(L_t)&\to& 0& \\
&& && \parallel
&&
\parallel
\ \ \ \ \ \ \ \ \ \ \ \ \ \ 
\parallel
&

&&
\\
&& && \Z^{d(n_1+n_2+\cdots+n_{d+1})}
&&
\Z^{n_1+n_2+\cdots+n_{d+1}}
\Z
&&&
\\
\end{array}
$$
\end{proof}

\begin{corollary}
\label{1dec2021-2}
The genus of the curve $L_t$ equals
$$
\frac{1}{2}\left(
(d-1)n+2-\sum_{i=1}^{d+1}{}(n_i, n)
\right),\ \ \ n:=\sum_{j=1}^{d+1}n_j,
$$
where $f:=l_1^{n_1}l_2^{n_2}\cdots l_{d+1}^{n_{d+1}}$ and 
$t$ is a regular value of $f$.
\end{corollary}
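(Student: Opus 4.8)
The plan is to deduce the genus of $L_t$ directly from the first Betti number computed in Theorem \ref{1dec2021-1}, using the relationship between the homology of the affine curve $L_t$ and that of its smooth projective completion $\overline{L_t}$. The key observation is that $L_t$ is a smooth affine curve (since $t$ is a regular value), so its compactification $\overline{L_t}$ is a compact Riemann surface of some genus $g$, and $L_t$ is obtained from $\overline{L_t}$ by deleting the finitely many points at infinity. Thus $b_1(L_t) = 2g + (\mu - 1)$ where $\mu$ is the number of points of $\overline{L_t}$ lying over infinity, since removing $\mu$ points from a genus-$g$ surface raises the first Betti number from $2g$ to $2g + \mu - 1$. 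Equating this to the rank $(d-1)n + 1$ from Theorem \ref{1dec2021-1}, where $n = \sum_{j} n_j$, yields $2g = (d-1)n + 1 - (\mu - 1) = (d-1)n + 2 - \mu$, so everything reduces to counting $\mu$.

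The main work is therefore to compute $\mu$, the number of points at infinity on $\overline{L_t}$. First I would homogenize: the closure of $\{f = t\}$ in $\P^2$ meets the line at infinity $\{z=0\}$ in points governed by the highest-degree part of $f - t$, which is the leading form of $f = \prod_i l_i^{n_i}$, namely $\prod_i (\ell_i^\infty)^{n_i}$ where $\ell_i^\infty$ is the linear form at infinity of $l_i$. Since the lines are in general position, no two of them are parallel, so the $d+1$ directions $\ell_i^\infty$ give $d+1$ distinct points on the line at infinity, the $i$-th appearing with multiplicity $n_i$. The subtlety is that the projective closure $\overline{L_t}$ is singular at these points at infinity (the local structure there is exactly that of the singularity $x^{n_i} y^{\,\cdot} = t$-type branches analyzed in Section \ref{8dec2021}), so $\mu$ is not simply $d+1$ but the number of \emph{branches} of $\overline{L_t}$ over each such point, summed over $i$.

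To count the branches over the point corresponding to direction $i$, I would use precisely the local analysis from Section \ref{8dec2021}. Near infinity in the direction of $l_i$, the curve looks like the Milnor fiber of the product of $l_i^{n_i}$ against the remaining factors; since the other $d$ lines pass transversally through that point at infinity with total contribution of degree $n - n_i$, the local equation is of the form $u^{n_i} v^{\,n-n_i} = \text{const}$ after suitable coordinates, and by the discussion in Section \ref{8dec2021} the number of local branches (connected components / cylinders) is the greatest common divisor $e_i := (n_i, n - n_i) = (n_i, n)$. Hence $\mu = \sum_{i=1}^{d+1} (n_i, n)$. Substituting into $2g = (d-1)n + 2 - \mu$ gives exactly
$$
g = \frac{1}{2}\left( (d-1)n + 2 - \sum_{i=1}^{d+1} (n_i, n) \right),
$$
as claimed.

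I expect the genuine obstacle to be the branch count at infinity: one must be careful that the normalization $\overline{L_t} \to \P^2$ separates the branches and that the number of these branches over the $i$-th point at infinity is indeed $(n_i, n)$ rather than some other gcd. The clean way to see this is to note that $n - n_i = \sum_{j \neq i} n_j$, and the transversal bundle of $d$ other lines behaves, locally at that point at infinity, like a single smooth factor of multiplicity $n - n_i$ crossing $l_i$ of multiplicity $n_i$ transversally; then $(n_i, n - n_i) = (n_i, n)$ by elementary number theory, and the cylinder count from Proposition \ref{10nov2021} and the surrounding discussion identifies this gcd with the number of components. Once this local picture is pinned down, the rest is the bookkeeping of the Euler-characteristic / Betti-number identity described above.
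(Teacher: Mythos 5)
Your strategy coincides with the paper's: read off $b_1(L_t)=(d-1)n+1$ from Theorem \ref{1dec2021-1}, use $b_1(L_t)=2g+\mu-1$ for a compact Riemann surface with $\mu$ punctures, and compute $\mu$ as the number of points of the normalization lying over the line at infinity. Before the substantive issue, one minor remark: the identity $b_1=2g+\mu-1$ presupposes that $L_t$ is connected. The paper makes this explicit --- $\gcd(n_1,\dots,n_{d+1})=1$ together with the regularity of $t$ forces $f-t$ to be irreducible --- whereas you take connectedness for granted; without it the identity has to be summed over components and the bookkeeping changes.

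The substantive problem is that your branch count at infinity rests on a false local picture. You claim that near the point $p_i$ at infinity ``the other $d$ lines pass transversally through that point at infinity with total contribution of degree $n-n_i$'', so that the local equation is $u^{n_i}v^{n-n_i}=\mathrm{const}$. In a generic arrangement no two lines are parallel, so each $l_j$ with $j\neq i$ meets the line at infinity at its own point $p_j\neq p_i$; near $p_i$ the homogenized factors $L_j$, $j\neq i$, are units and contribute nothing to the singularity. What actually crosses the line $\ell_i$ at $p_i$ is the line at infinity itself, and it enters through the pole of $f$: in local coordinates $u=L_i$, $v=z$ one has $f=\frac{u^{n_i}}{v^{n}}\cdot(\mathrm{unit})$, so the projective closure of $L_t$ is the germ $u^{n_i}\cdot(\mathrm{unit})=t\,v^{n}$, which passes through $p_i$ and has exactly $(n_i,n)$ analytic branches there. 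Your germ $u^{n_i}v^{n-n_i}=\mathrm{const}$ does not even pass through $p_i$; it is the local model of the affine saddles $l_i\cap l_j$ treated in Section \ref{8dec2021}, not of the points at infinity. You are rescued only by the identity $(n_i,n-n_i)=(n_i,n)$, so your final formula is correct, but the step as justified is wrong: had the misidentified multiplicity not differed from $n$ by a multiple of $n_i$, the count would have come out wrong. With the correct model --- which is precisely what the paper's proof uses --- the gcd $(n_i,n)$ appears directly and no such coincidence is needed.
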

\begin{proof}
By genus of  $L_t$ we mean the genus  of the compactified and desingularized curve. The hypothesis ${\rm gcd}(n_1,n_2,\ldots,n_{d+1})=1$ and $t$ is not a critical value of $f$, together imply that the curve $f=t$ is an irreducible polynomial. The curve $f=t$ intersects the line at infinity $\P ^1$ at the intersection $p_i$ of the lines $l_i=0$ with $\P ^1$. Near $p_i$ our curve has $(n_i, n)$ local irreducible components because 
$$
f-t=\frac{(a_ix+by_i+c_iz)^{n_i}}{z^n}g_i
$$
where  $l_i=a_ix+by_i+c_i$ and  $g_i$ is a holomorphic function near $p_i$ with $g(p_i)\not=0$.  
\end{proof}
The genus of the degree three curve $\{ xy(x+y-1)=1 \}$ is one. The genus of  the degree six curve $\{ xy^2(x+y-1)^3=1 \}$ is also one. 
\begin{figure}
\begin{center}
\includegraphics[width=0.5\textwidth]{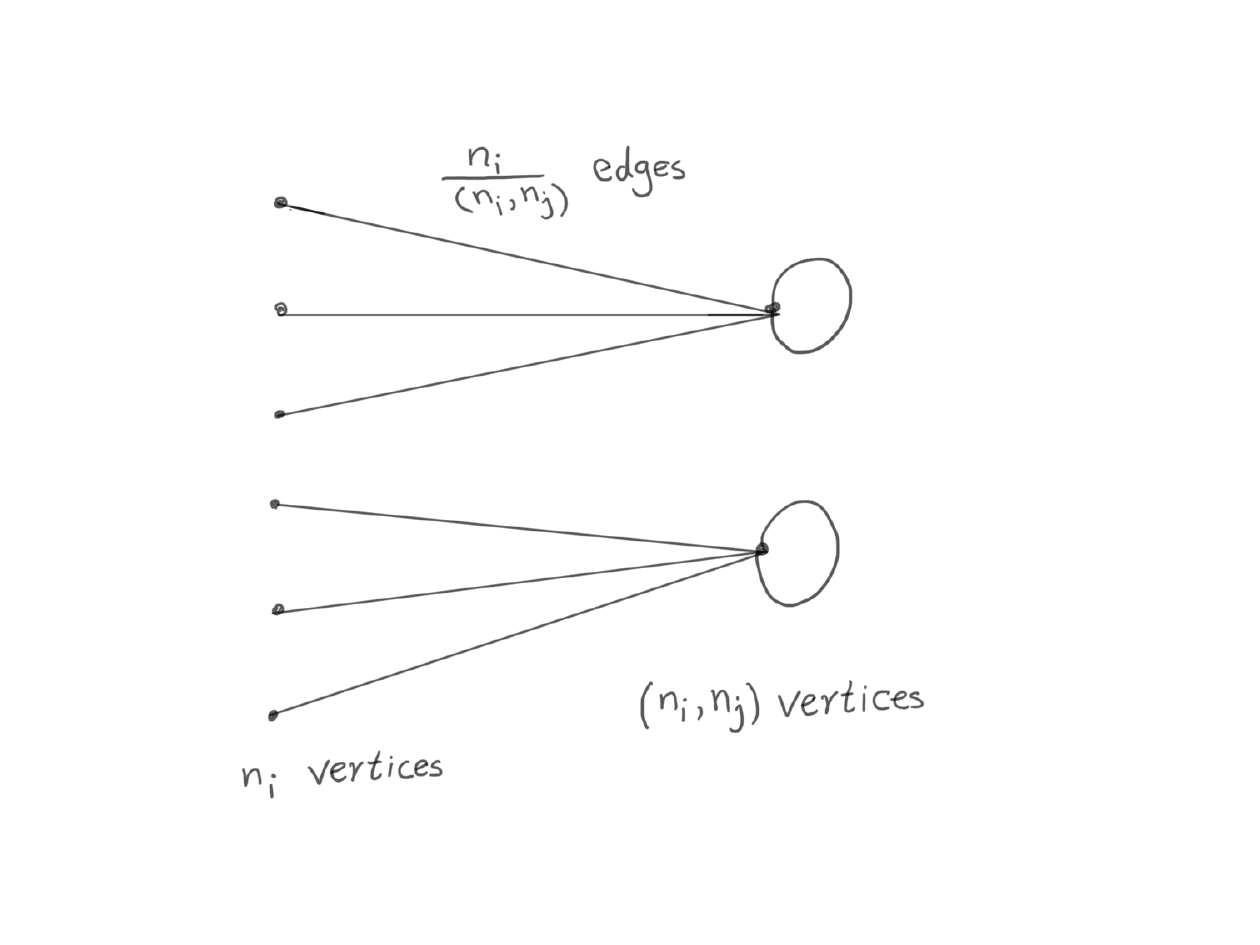}
\caption{Deformation retract of $L_t$}. 
\label{11122021-1}
\end{center}
\end{figure}

We are going to define a graph with $\sum_{i\not= j}(n_i,n_j)+\sum_{i=1}^{d+1}n_i$ vertices. 
The $(n_i,n_j)$ vertices   corresponds to the intersection points $l_i\cap l_j$. The $n_i$ vertices corresponds to the intersection of the line $\Sigma$ in the proof of Theorem \ref{1dec2021-1} with the fiber $L_t$, $t$ near to zero. 
Each group of $(n_i,n_j)$ vertices  are connected  with $n_i$ edges, each one with $\frac{n_{i}}{(n_i,n_j)}$ edges,  to $n_i$  vertices in the second group corresponding the intersection of $\Sigma$ with $L_t$ near $l_i=0$. This description is trivially  unique for $(n_i.n_j)=1$. If $(n_i,n_j)\not=1$ we have to determine the decomposition of $n_i$ vertices into $\frac{n_{i}}{(n_i,n_j)}$ sets of cardinality $(n_i,n_j)$. This might be done using the description of the deformation retract at the end of Section \ref{8dec2021}.    Moreover, we consider a loop for  each $(n_i,n_j)$ vertices. This will correspond to the saddle vanishing cycles. From the proof of Theorem \ref{1dec2021-1} it follows that
\begin{proposition}
\label{11dec2021}
The graph $G$ is a deformation retract of $L_t$. 
\end{proposition}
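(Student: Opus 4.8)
The plan is to upgrade the purely homological splitting (\ref{directsum}) used in the proof of Theorem \ref{1dec2021-1} to an explicit chain of deformation retractions, so that $G$ appears as the final one-dimensional skeleton. Throughout I keep the notation of that proof: the projection $\pi:L_t\to\C$, $(x,y)\mapsto x$, the critical values $C_{ij}=\pi(P_{ij})$, the base point $b$, the generic fibre $\Sigma=\pi^{-1}(b)$ of cardinality $n=\sum_k n_k$, and the small discs $D_{ij}$ around the $C_{ij}$. The first point to secure is that $\pi$ is a \emph{proper} degree-$n$ branched covering of $\C$: since the arrangement is generic no line $l_i$ is vertical, hence $f(x_0,\cdot)$ has degree $n$ in $y$ for every $x_0$ and the preimage of a compact set stays bounded. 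Consequently, over $\C\setminus\{C_{ij}\}$ the map $\pi$ is a genuine $n$-sheeted covering.

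Next I would dispose of the non-compact ends and retract the base onto a spider. Choosing a disc $\bar D_R$ whose interior contains all the $C_{ij}$, the radial retraction of $\C$ onto $\bar D_R$ fixes the branch locus, never drives a point through it, and is stationary on $\bar D_R$; by the homotopy lifting property for the covering it lifts to a deformation retraction of $L_t$ onto the compact surface-with-boundary $\pi^{-1}(\bar D_R)$. Then let $T\subset\bar D_R$ be the union of $b$, of pairwise disjoint arcs $a_{ij}$ from $b$ to the boundary points $b_{ij}$, and of the discs $D_{ij}$; this is the standard configuration underlying (\ref{directsum}), cf.\ \cite{lam81}. Since $\bar D_R$ deformation retracts onto $T$ stationarily on $T\supseteq\{C_{ij}\}$, the same lifting argument yields a deformation retraction of $\pi^{-1}(\bar D_R)$ onto $\pi^{-1}(T)$. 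Over $b$ we recover the $n$ vertices of the second type, grouped into the subsets of size $n_i$ near each line $l_i$; over the interior of each arc $a_{ij}$ the covering is trivial, contributing $n$ edges joining the points of $\Sigma$ to the $n$ points of $\pi^{-1}(b_{ij})$.

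Finally I would run the local model. By the description at the end of Section \ref{8dec2021}, $\pi^{-1}(D_{ij})$ is the union of $(n_i,n_j)$ cylinders carrying $n_i+n_j$ of the boundary points, together with $n-n_i-n_j$ discs each carrying a single boundary point. The discs, together with the arc-edges attached to them, retract onto the corresponding points of $\Sigma$ and disappear. Each cylinder cluster retracts, exactly as in Figure \ref{17122021}, onto the graph carrying the $(n_i,n_j)$ centre vertices with their loops $\delta_h$ (the saddle vanishing cycles), the surviving arc-edges then joining these centres to the $n_i$ (resp.\ $n_j$) vertices of $\Sigma$ near $l_i$ (resp.\ $l_j$), with $n_i/(n_i,n_j)$ (resp.\ $n_j/(n_i,n_j)$) edges into each cylinder. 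Reassembling all the clusters over $T$ reproduces precisely the vertices, edges and loops defining $G$, and connectivity of $G$ follows from the irreducibility of $L_t$ when $\gcd(n_1,\dots,n_{d+1})=1$.

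The step I expect to be the main obstacle is this last gluing. The homological decomposition (\ref{directsum}) is clean precisely because homology is insensitive to how the local pieces are attached, whereas an honest deformation retract must track, along each boundary circle $\delta_{1,h},\delta_{2,h}$, which strand of the trivial covering over $a_{ij}$ feeds into which cylinder. Supplying this combinatorics is exactly the purpose of the explicit local retract of Section \ref{8dec2021}, with its integers $p,q,a,b$; verifying that the global attaching maps agree with that local bookkeeping — in particular that the $n_i/(n_i,n_j)$ incidence pattern is the correct one — is the delicate part, while the retractions themselves are routine once properness and the spider $T$ are in place.
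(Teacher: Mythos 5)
Your proposal is correct and follows essentially the same route as the paper, whose own justification is simply the remark that the statement ``follows from the proof of Theorem~\ref{1dec2021-1}'': there, the projection $\pi$, the decomposition of the base into a spider with discs around the critical values $C_{ij}$, and the local cylinder/disc models of Section~\ref{8dec2021} are exactly the ingredients you assemble, and the lifted-retraction details plus the strand-to-cylinder bookkeeping you flag as delicate are precisely what the paper delegates to the deformation retract described at the end of Section~\ref{8dec2021}.
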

\begin{figure}
\begin{center}
\includegraphics[width=0.5\textwidth]{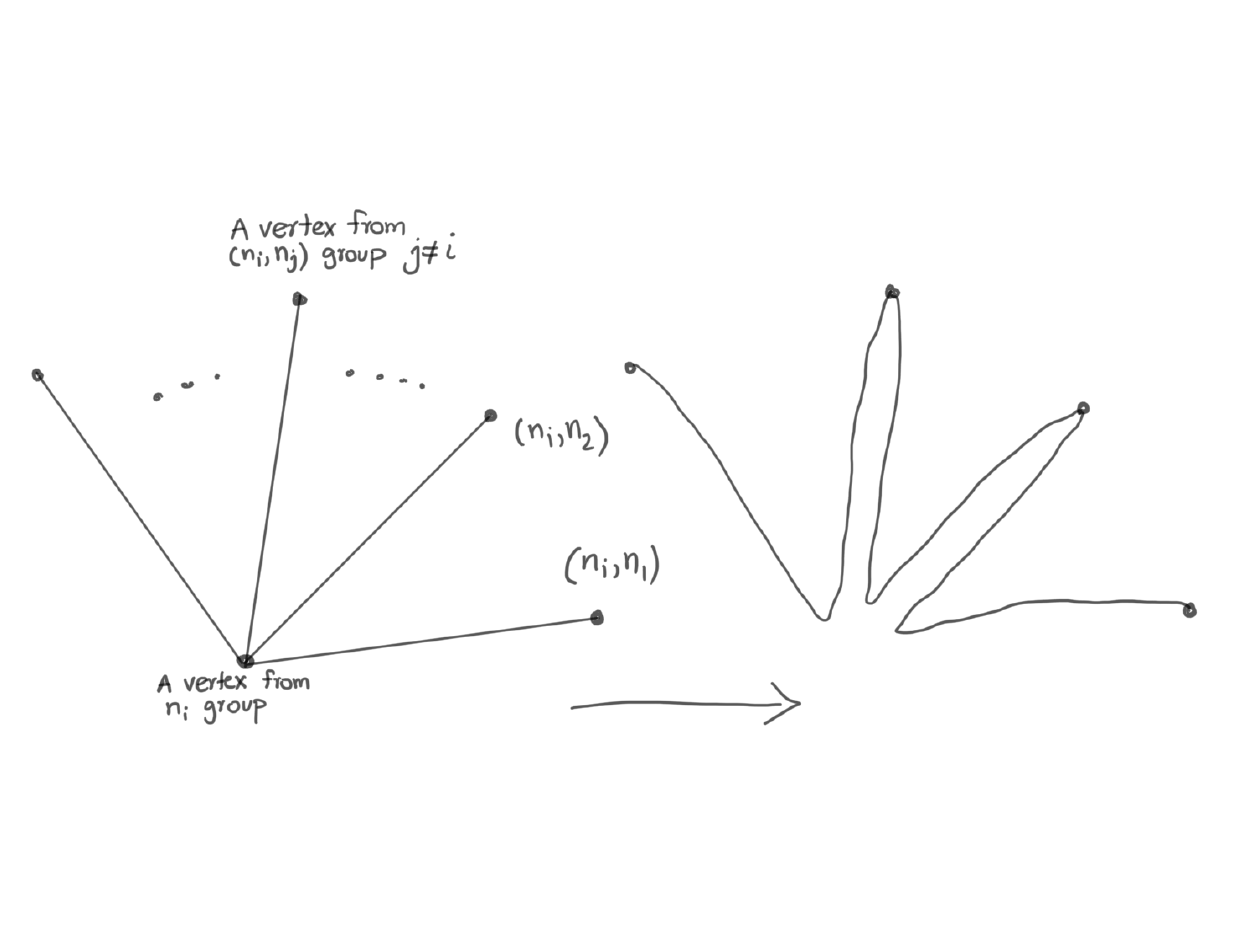}
\caption{Getting the graph $\check G$ from $G$}. 
\label{11122021-3}
\end{center}
\end{figure}
\begin{figure}[t]
\begin{center}
\includegraphics[width=0.5\textwidth]{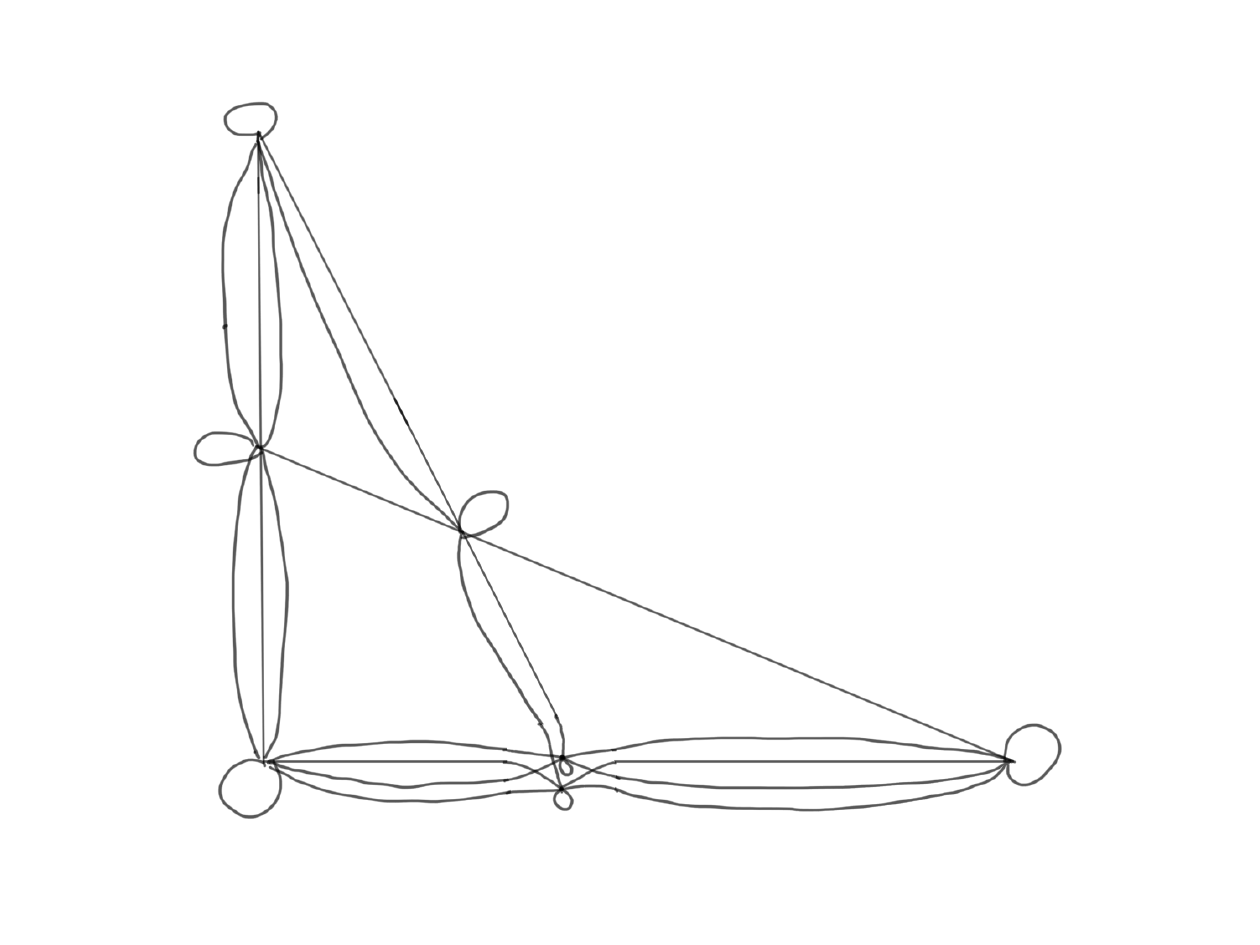}
\caption{Deformation retract of $L_t$ for four lines with multiplicities 1,2,3,4}. 
\label{11122021-2}
\end{center}
\end{figure}
For $f$ and lines $l_i$ defined over real numbers, there is another way to describe  a simpler graph $\check G$ which  shows the homotopy type of $L_t$.  Each vertex in the $n_i$ group is connected with $d$ edges to $d$ vertices corresponding to the intersection of $l_i$ with other lines. We order  them as they meet $l_i$.  We replace this with the one in Figure \ref{11122021-3} and we get a graph $\check G$ with $\sum_{i\not= j}(n_i,n_j)$ vertices which can be described easily using the real geometry of lines as follows.     
We cut out infinite segments of the union of lines $\cup_{i=1}^{d+1}l_i\subset\R^2$, replace each intersection point $l_i\cap l_j$ with $(n_i,n_j)$ vertices and  replace  each finite segment which connects $l_i\cap l_j$ to $l_i\cap l_k$ (and does not intersects other lines in its interior)  with $n_i$ edges connecting $(n_i,n_j)$ vertices with $(n_i,n_k)$ vertices, provided that each vertex in the first and second group  has only $\frac{n_i}{(n_i,n_j)}$ and $\frac{n_i}{(n_i,n_k)}$ edges respectively. Moreover, we consider a loop in each $(n_i,n_j)$ vertices. We obtain the new graph $\check G$. 
   \begin{remark}\rm
The the deformation retracts above appeared first in the study of the topology and the monodromy of the logarithmic foliation with first integral
$f=x^py^p(1+x+y)$ in \cite{boga11} in relation with the classical paper \cite{giho85}.
\end{remark}

\section{Computation of intersection indices}
\label{02dec2021}
The computation of intersection indices between vanishing cycles is an important ingredient in the study of deformations of singularities. By analogy we define intersection index for paths  in the leafs of a holomorphic  foliation. Our main result Theorem \ref{02dec2021} in this section is a  generalization of a theorem by S. Gusein-Zade and N. A'Campo, see \cite[Section 2]{mov}.

Let us consider a holmorphic foliation $\F(\omega)$  in $\R^2$ given by a polynomial 1-form $\omega$ with real coefficients. We consider  an open subset $U\subset \R^2$ with exactly two saddle singularities $O_1$ and $O_2$ of $\F$ and assume that $O_1$ and $O_2$ have a common separatrix. We assume that the 1-forms $\omega$ near $O_i,\ i=1,2$ in local coordinates $(x_i,y_i)$ is given by the linear equation    $x_idy_i+\alpha_i y_i dx_i,\ \alpha_i >0$, and so, it has the meromorphic local first integral $y_ix_i^{\alpha_i}$. In a neighborhood of $O_i$ the foliation has two separatrices $x_i=0$ and $y_i=0$.
The common separatrix   is given by $y_i=0$. We consider transversal sections to $\F$ at the points $b_0,b_1,b_2$ respectively in the common separatrix, $x_1=0$ and $x_2=0$. Let $\gamma_0$ be the real trajectory of $\F$ which connects a point $p_1\in \Sigma_1$ to $p_2\in\Sigma_2$ crossing the point $p_0\in\Sigma_0$, see Figure \ref{10dec2021-1}.

We consider now the complex foliation $\F$ in $\C^2$ and use the same notation for complexified objects.   We consider  a  path $\lambda:\R\to \Sigma_0$ which has period one and  restricted to $[0,1]$ turns  once around $b_0$ anticlockwise.  The path $\gamma_0$  from $p_1$ to $p_2$ can be lifted  to a unique path  $\gamma_t$ in a leaf of $\F$  which crosses  $\lambda(t)\in\Sigma_0$  and connects $q_1(t)\in\Sigma_1$ to $q_2(t)\in\Sigma_2$. This lifting in general is not possible, however,  in our situation this follows from the fact that  $O_1$ and $O_2$ are linearizable and $\alpha_i\in\R$. Since $\alpha_i>0$,  the trace of $q_i(t)$ in $\Sigma_i$  will give us paths $\lambda_i$ in $\Sigma_i$ turning around $b_i$ anticlockwise. 
If we assume that $\lambda(\frac{1}{2})$ is again in the real domain $\R^2$ and it lies in a real leaf $\gamma$ of $\F$ in the other side of the common separatrix,  then  we have the main result of this section. 
\begin{theorem}
\label{10dec2021}
With the notations as above
\begin{equation}
\label{01032022}
\langle \gamma_{\frac{1}{2}},\gamma\rangle=+1,  
\end{equation}
where we have oriented $\gamma$ from $O_1$ to $O_2$. 
\end{theorem}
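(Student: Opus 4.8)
The plan is to reduce the statement to a single local computation inside one complex leaf and to read off the sign from the linearized normal forms at $O_1$ and $O_2$. First I would observe that the leaf value $F$ of $\F$ is constant along the lift $\gamma_t$, so that $\gamma_{1/2}$ lies in the complex leaf $L:=\{F=\lambda(\tfrac12)\}$; since $\lambda(\tfrac12)$ is a real value, this same leaf $L$ contains the real trajectory $\gamma$. Hence both arcs lie in one Riemann surface $L$, and the index $\langle\gamma_{1/2},\gamma\rangle$ is by definition the signed number of transverse intersection points of the two arcs in $L$. The point $\lambda(\tfrac12)\in\Sigma_0$ belongs to both arcs and is the only candidate crossing inside the chosen neighbourhood $U$, so the whole computation is local near $\lambda(\tfrac12)$.

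Next I would make the lift explicit. Using the local first integrals $F_i=y_ix_i^{\alpha_i}$, the monodromy transport of $\gamma_0$ around $b_0$ is exactly the operation of letting the leaf value $F$ run once around $0$, in complete analogy with the fibration $x^my^n$ of Section~\ref{8dec2021} (here with the real exponent $\alpha_i$ in place of $n/m$). Parameterising $\Sigma_0$ by the leaf value, $\lambda(t)$ corresponds to $F=\epsilon e^{2\pi i t}$, which circles $b_0$ anticlockwise, and the hypotheses that the $O_i$ are linearizable with real $\alpha_i>0$ guarantee that the lift $\gamma_t$ exists, is unique, stays in $U$, and that its endpoints $q_i(t)$ trace the anticlockwise half-loops $\lambda_i$ around $b_i$. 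Writing $\gamma$ in the leaf coordinate (say $x_1$) as the positive real axis oriented from $O_1$ to $O_2$, I would compute the tangent direction of $\gamma_{1/2}$ at $\lambda(\tfrac12)$ from the explicit transport and check that it has nonzero imaginary part, which proves that the two arcs meet \emph{transversally} there and nowhere else in $U$.

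Finally I would determine the sign. The local model is $F=yx^{\alpha}$ with $\alpha>0$: a half-turn $F\mapsto e^{i\pi}F$ carries the real branch on the $F>0$ side (carrying $\gamma_0$) to an arc crossing the real branch on the $F<0$ side (carrying $\gamma$) exactly once, and the orientation data --- $\lambda$ anticlockwise, $\gamma_{1/2}$ inheriting the orientation of $\gamma_0$ from $\Sigma_1$ to $\Sigma_2$, and $\gamma$ oriented from $O_1$ to $O_2$ --- fix this crossing number to be $+1$. The main obstacle is precisely this sign-and-transversality bookkeeping: one must control the tangent of the transported arc through the two saddle regions carefully enough to be sure there is a single transverse crossing and that its contribution is $+1$ rather than $-1$. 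This is where the linearizability and the positivity $\alpha_i>0$ are essential, exactly as the coprimality-free monodromy computation of Theorem~\ref{12nov2021} was used to keep the transport under explicit control.
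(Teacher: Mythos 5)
Your overall strategy --- working directly inside the complex leaf through $\lambda(\tfrac{1}{2})$ and reducing the theorem to an explicit transversality-and-sign computation via the linearizations at $O_1,O_2$ --- is viable, and it is organized differently from the paper's proof, which instead projects $\gamma_{\frac{1}{2}}$ and $\gamma$ onto the common separatrix along the glued local fibrations $dx_i=0,\ i=1,2$, and reads the index off from the picture there (Figure~\ref{10dec2021-2}). Two preliminary corrections: the hypotheses provide no global first integral $F$, only the local ones $y_ix_i^{\alpha_i}$, so ``the leaf $\{F=\lambda(\tfrac{1}{2})\}$'' must be replaced by ``the leaf of the complexified foliation through $\lambda(\tfrac{1}{2})$'' (which does contain both arcs, so this is cosmetic); and a tangent computation at the single point $\lambda(\tfrac{1}{2})$ can establish transversality \emph{there}, but it cannot establish ``and nowhere else in $U$'' --- for that you must control the whole lifted arc, for instance by choosing the lift to be a straight segment in the logarithmic coordinate $x_i=e^{u_i}$ of each linearized chart, in which the $O_1$-half of $\gamma_{\frac{1}{2}}$ runs from $u_1=\ln x_1^{(0)}+\pi i/\alpha_1$ to the real point $u_1=\ln\beta_0$ and hence stays in the open upper half-plane except at its endpoint.

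The genuine gap is in your final sign step. In the single-saddle model $F=yx^{\alpha}$, the half-turn of the leaf value does \emph{not} produce an arc crossing the real branch on the other side: inside one saddle chart, the lifted half-arc meets the real trajectory $\gamma$ only at its endpoint $\lambda(\tfrac{1}{2})\in\Sigma_0$, and it lies on one side of $\gamma$ up to that endpoint, so no intersection index can be read off from one chart alone. Whether $\gamma_{\frac{1}{2}}$ actually \emph{crosses} $\gamma$ at $\lambda(\tfrac{1}{2})$, rather than touching it and returning to the same side (which would give index $0$), is decided by how the two saddle charts are glued along the common separatrix: on the reals the gluing reverses the separatrix coordinate ($x_2\approx \mathrm{const}-x_1$), so the transition between the induced leaf coordinates is holomorphic with negative real derivative at $\lambda(\tfrac{1}{2})$, i.e.\ a local rotation by $\pi$; consequently the half-arc arriving from the $O_1$-side (in the upper half-plane of its chart, since $\alpha_1>0$ and $\lambda$ turns anticlockwise) and the half-arc leaving toward $\Sigma_2$ (in the upper half-plane of \emph{its} chart, since $\alpha_2>0$) lie on \emph{opposite} sides of $\gamma$ in any one coordinate. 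Only this yields a single transverse crossing, and only then does the orientation bookkeeping ($\gamma$ oriented from $O_1$ to $O_2$) give $+1$. This two-chart mechanism, which your proposal never analyzes, is precisely what the paper's projection onto the common separatrix is designed to exhibit; as written, your local model is equally consistent with the false conclusion $\langle\gamma_{\frac{1}{2}},\gamma\rangle=0$.
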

\begin{proof}
  We look at the projection $\pi(\gamma_t)$ of the path $\gamma_t$ in the common separatrix and we see Figure \ref{10dec2021-2}. The projection can be constructed in a $C^\infty$ context by gluing the local transversal foliations $dx_i=0,\ i=1,2$. The intersection number is not changed under this projection and \eqref{01032022} follows. 
\end{proof}
\begin{figure}[h]
\begin{center}
\includegraphics[width=0.7\textwidth]{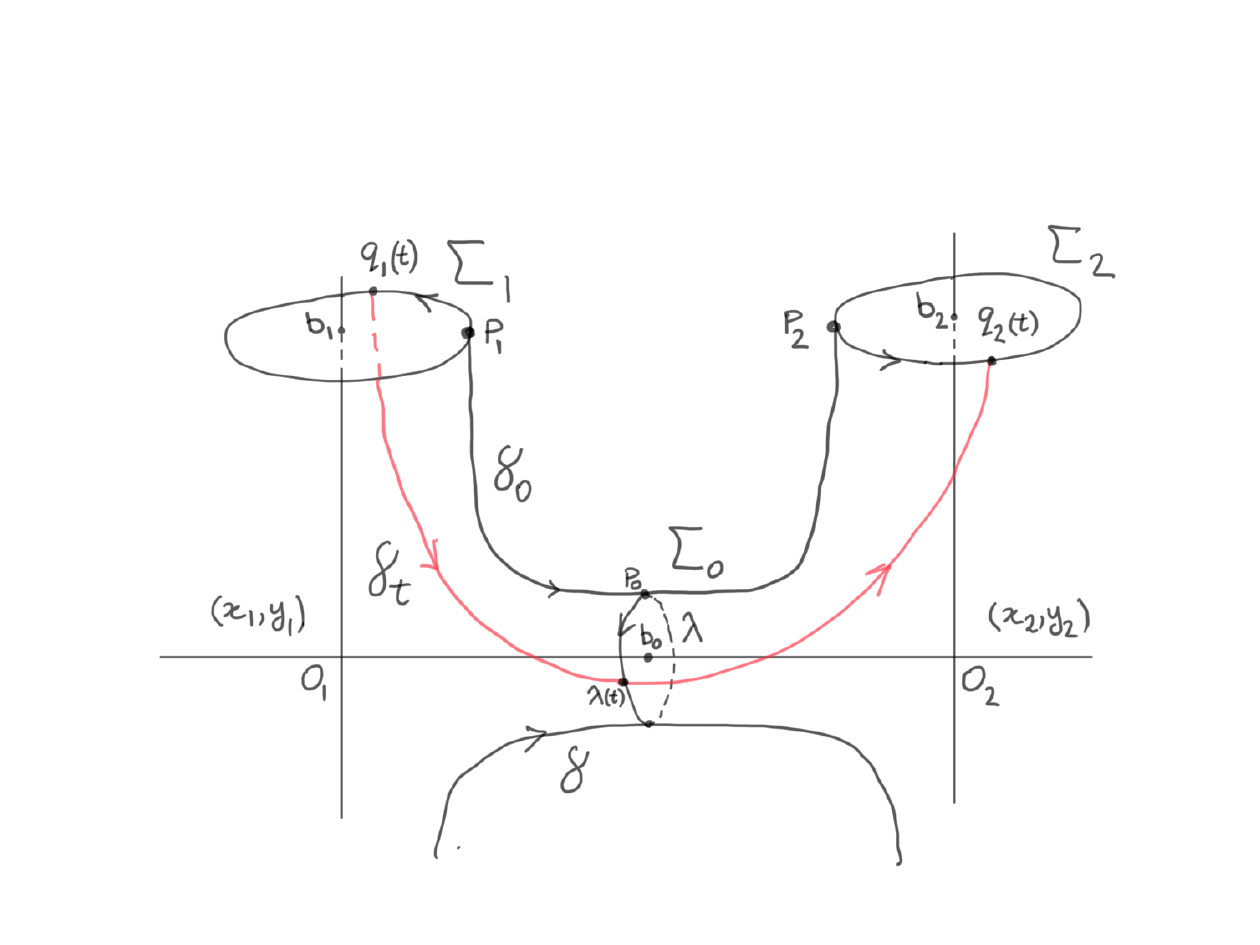}
\caption{Two saddles}. 
\label{10dec2021-1}
\end{center}
\end{figure}
\begin{figure}[h]
\begin{center}
\includegraphics[width=0.5\textwidth]{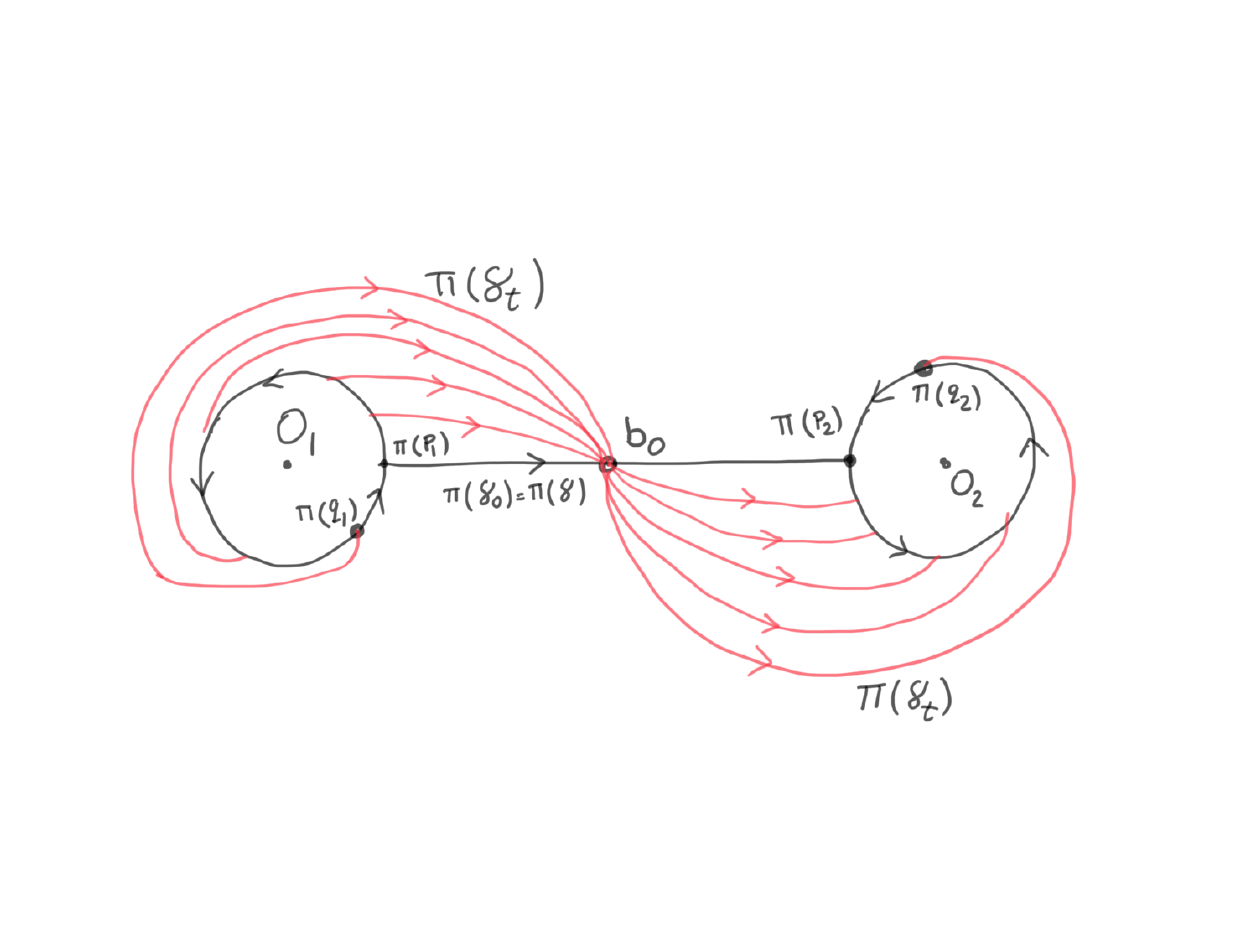}
\caption{Projection into the common separatrix}. 
\label{10dec2021-2}
\end{center}
\end{figure}

\begin{remark}\rm
The above proof uses arguments close to the one  used by  Gusein-Zade \cite{guse74} for germs of isolated singularities. 
A second proof can be produced by making use of more elaborated arguments of A'Campo \cite[p.23-24]{acam75} as follows. Consider the complex-conjugate path 
$\overline{\gamma_{\frac12}}= \gamma_{-\frac12}$. As the complex conjugation inverses the orientation then 
$\langle \gamma_{-\frac{1}{2}},\gamma\rangle=-\langle \gamma_{\frac{1}{2}},\gamma\rangle$. On the other hand the class $ \gamma_{\frac{1}{2}} - \gamma_{-\frac{1}{2}}$ can be represented by 
two disjoint paths $\alpha_1, \alpha_2$ connecting $\Sigma_1$ to $\Sigma_1 $ and  $\Sigma_2 $ to $\Sigma_2 $ respectively. These paths define geometrically the holonomy of the two vertical separatrices. It remains to compute the intersection index of $\alpha_1$ (representing holonomy) and the 
class of $\gamma_{\frac{1}{2}}$ (representing the Dulac map near $O_1$). This is of course a local computation in a neighborhood of $O_1$ and it follows from the local description of a complex saddle that their intersection index equals one. Similar computation holds for $\alpha_2$ from which the result follows.

A third proof can be obtained by deformation. Namely, it suffices to note that the intersection index depends continuously on parameters, hence it is a constant. Such a deformation is possible in any compact interval for the parameter, provided that the initial and end points of the path on the cross-section $\Sigma_1$ and $\Sigma_2$ are  sufficiently close to the vertical separatrix. Therefore, it is enough to check the claim of the theorem for some toy example, like
$df=0$ with $f=(x^2-1)y$, in which an explicit computation of different paths and their deformations is possible. 
\end{remark}
\begin{remark}\rm
Theorem \ref{10dec2021} holds true without the assumption that the saddle are linearizable (with similar proof). We only need to know the asymptotic behavior of the Dulac map.
\end{remark}
\section{The orbit of a vanishing cycle of center type}
\label{14.12.2021}
Let us consider the polynomial $f$ in $\C^2$ given by \eqref{17nov2021}. 
 The map $f : \C^2 \to \C$
defines a locally trivial fibration over the $\C\backslash C$, where $C$ consists of $\frac{d(d-1)}{2}+1$ critical values of $f$. It is the union of the values of critical points of center type (which we assume that such critical values are distinct) and  the critical value $0$ over the  
$\frac{d(d+1)}{2} $ saddles points which are intersection of lines.  We  choose a point  $b\in\C$ with ${\rm Im}(b)>0$ and fix straight paths $\gamma_c,\ c\in C$ joining $b$  to the critical values of $f$ (a distinguished set of paths). Let also $h_{c}: H_1(L_b,\Z)\to H_1(L_b,\Z)$ be the monodromy along $\gamma_c$ until getting near to $c$, turning around $c$ anticlockwise and returning to $b$ along $\gamma_c^{-1}$.  
Let $\delta_c\in H_1(L_b,\Z)$ be the center vanishing cycle along $\lambda_c,\ c\not=0$. Along $\gamma_0$ we get $\sum_{i\not =j} {}(n_i,n_j)$ saddle vanishing cycles in $H_1(L_b,\Z)$. 
 We denote by $\overline{L_b}$ the curve obtained by a smooth compactification  of $L_b$. 
\begin{theorem}
\label{camacho2021}
Assume that $n_i$'s are pairwise coprime. The  $\Q$-vector space $O_{\delta}\subset H_1(L_b, \Q)$ generated by the action of monodromy on a fixed center vanishing cycle $\delta$ has  codimension $d$ in $ H_1(L_b, \Q)$. Moreover, 
\begin{equation}
\label{18122021france}
O_{\delta}=\left \{\gamma \in H_1(L_b,\Q) \Big|\int_{\gamma} \frac{dl_i}{l_i} =0, i= 1,\dots, d+1 \right \}
\end{equation}
and the restriction of the map 
$ H_1(L_b, \Q)\to  H_1( \overline{L_b},\Q)$ induced by inclusion, to $O_\delta$ is surjective. 
\end{theorem}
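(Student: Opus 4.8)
The plan is to prove the two inclusions $O_\delta\subseteq W$ and $O_\delta\supseteq W$ separately, where $W$ denotes the right–hand side of \eqref{18122021france}, and then to read off the surjectivity onto $H_1(\overline{L_b},\Q)$ from a residue computation. The inclusion $O_\delta\subseteq W$ I expect to be the soft part. First, the center vanishing cycle $\delta$ vanishes at a Morse critical point $p$ of $f$ lying off the arrangement $\cup_i\{l_i=0\}$; hence for $b$ near the corresponding critical value $\delta$ is carried by a loop bounding a disc in a contractible neighbourhood of $p$ on which each $\frac{dl_i}{l_i}$ is holomorphic, so $\int_\delta \frac{dl_i}{l_i}=0$ and $\delta\in W$. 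Second, each $\frac{dl_i}{l_i}$ is a closed $1$–form defined on the \emph{whole} total space $f^{-1}(\C^*)=\C^2\setminus\cup_i\{l_i=0\}$; by the Gauss–Manin (flatness) argument its restriction to the fibres is a flat section of the cohomology bundle, so the functional $\gamma\mapsto\int_\gamma \frac{dl_i}{l_i}$ is invariant under the monodromy. Consequently $W$ is a monodromy–invariant subspace containing $\delta$, and therefore it contains the whole $\Q$–span $O_\delta$ of the orbit.

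Next I would pin down the codimension of $W$ and, simultaneously, the surjectivity, by a single residue lemma. Let $K=\ker\big(H_1(L_b,\Q)\to H_1(\overline{L_b},\Q)\big)$; by Corollary \ref{1dec2021-2} together with Theorem \ref{1dec2021-1}, $K$ is spanned by the loops around the punctures of $\overline{L_b}$, subject to the single relation that their sum vanishes. On $K$ the functional $\gamma\mapsto\int_\gamma\frac{dl_i}{l_i}$ is given by the residues of $\frac{dl_i}{l_i}$ at the punctures. Over the point at infinity $p_j$ of the line $l_j$, on each local branch $l_j$ has a zero while all other $l_i$ have poles of one and the same order; imposing $\sum_i n_i\,\mathrm{ord}(l_i)=\mathrm{ord}(f)=0$ and solving shows that any relation $\sum_i\mu_i\,\mathrm{Res}\,\frac{dl_i}{l_i}=0$ forces $\mu_j=\big(\sum_k\mu_k\big)n_j/n$ for every $j$, i.e. $\mu$ is proportional to $(n_1,\dots,n_{d+1})$, where $n=\sum_i n_i$. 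Hence the residue matrix has rank exactly $d$, so these functionals already have rank $d$ on the subspace $K$; since on all of $H_1(L_b,\Q)$ they satisfy the single relation $\sum_i n_i\frac{dl_i}{l_i}=\frac{df}{f}=0$, their rank there is exactly $d$, giving $\mathrm{codim}\,W=d$. The same rank statement yields $W+K=H_1(L_b,\Q)$, so that $W$ — and hence $O_\delta$, once the equality $O_\delta=W$ is known — maps onto $H_1(L_b,\Q)/K=H_1(\overline{L_b},\Q)$.

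The heart of the matter, and the step I expect to be the main obstacle, is the reverse inclusion $O_\delta\supseteq W$; equivalently $\dim O_\delta\geq\dim W=(d-1)n+1-d$. This is exactly where the pairwise coprimality of the $n_i$ is used: since $(n_i,n_j)=1$, each node $l_i\cap l_j$ carries a single cylinder and a single saddle vanishing cycle $\delta_{ij}$, and the local monodromy around $0$ reduces, by Theorem \ref{12nov2021} with $e=1$, to one Picard–Lefschetz twist in $\delta_{ij}$. The strategy is threefold: (i) apply the twists $h_c(\gamma)=\gamma-\langle\gamma,\delta_c\rangle\delta_c$ around the center values, using the intersection indices of Theorem \ref{10dec2021} to show that the intersection graph on the center vanishing cycles is connected, so $O_\delta$ contains every center vanishing cycle; (ii) feed these cycles into the monodromy around $0$, a product of the node twists $h_{\delta_{ij}}$, producing inside $W$ the further combinations $\langle\gamma,\delta_{ij}\rangle\delta_{ij}$ needed to escape the span of the center cycles; (iii) run a dimension count, organised along the deformation retract $\check G$ of Proposition \ref{11dec2021}, to verify that the subspace so generated exhausts $W$. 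The genuine difficulty is (iii): one must exclude that the orbit lands in a proper invariant subspace of $W$, i.e. one needs an irreducibility statement. I would derive it by showing that the pairings $\langle\delta,\delta_{ij}\rangle$ and $\langle\delta_c,\delta_{ij}\rangle$ supplied by Theorem \ref{10dec2021} are rich enough that repeated twisting connects every independent cycle of $\check G$ to $\delta$, the coprimality being precisely what keeps this bookkeeping one–dimensional at each node and makes the count close up at $\dim W$. Combined with the first two paragraphs, $O_\delta=W$ together with the residue lemma then yields all three assertions.
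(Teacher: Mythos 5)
Your first two paragraphs are correct and essentially reproduce the paper's own argument: the inclusion $O_\delta\subseteq W$ is what the paper dismisses as ``$\subseteq$ is trivially true'', and your residue lemma at the punctures is the paper's computation \eqref{18122021toulouse}, which indeed gives both $\operatorname{codim}W=d$ and, via $W+K=H_1(L_b,\Q)$, the surjectivity onto $H_1(\overline{L_b},\Q)$. The gap is that the entire content of the theorem sits in your third paragraph, where you offer a plan rather than a proof, and the plan misdescribes the mechanism at two decisive points. First, step (i) cannot run on twists around the center values alone: distinct center vanishing cycles lie in disjoint neighbourhoods of distinct Morse points, one per bounded polygon, so all their mutual intersection numbers vanish, the ``intersection graph on the center vanishing cycles'' is totally disconnected, and $h_c(\delta)=\delta$ for every center value $c$. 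Connectivity is created only by interleaving the monodromy $h_0$ around the saddle value $0$: since $f$ restricted to a cross-section of the common edge $l_1$ of two adjacent polygons is $z\mapsto z^{n_1}$, the paper applies $h_0^{[n_1/2]+\epsilon}$ to $\delta_{c_1}$ and uses Theorem \ref{10dec2021} to show that this \emph{iterate} has nonzero intersection with $\delta_{c_2}$; only then does Picard--Lefschetz put $\delta_{c_2}$ into $O_\delta$. Second, in step (ii) you call $h_0$ ``a product of the node twists'', which is false as soon as some $n_i>1$: the fiber over $0$ is non-reduced, the local monodromy at a node is the permutation-like map of Theorem \ref{12nov2021}, and only its $n_in_j$-th power is a Picard--Lefschetz twist. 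This is exactly why the paper needs the powers $h^{a}$, $a=a_1\cdots a_s$, and obtains the polygon relation \eqref{12dec2021}, $h^{a}(\delta)=\delta+\sum_i\frac{a}{a_ia_{i+1}}\delta_{i,i+1}$, with those specific rational coefficients.

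Third, and most importantly, step (iii) is not a bookkeeping detail that can be deferred: it is the theorem. The paper settles it by proving two concrete statements: (a) the monodromy orbit of the center cycles surjects onto $H_1(L_b,\Q)/S$, where $S$ is the span of the saddle vanishing cycles, by iterating the powers $h^{a_1\cdots\hat a_i\cdots a_s}$, which replace one edge of a cycle at a time in the graph model of Proposition \ref{11dec2021}; and (b) the orbit contains the $\binom{d}{2}$ polygon elements \eqref{12dec2021}, which are linearly independent and, since by coprimality there is one saddle cycle per node so that $\dim S=\binom{d+1}{2}$, span a subspace of codimension $\binom{d+1}{2}-\binom{d}{2}=d$ in $S$. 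Together (a) and (b) give $\operatorname{codim}O_\delta\le d$, and equality then follows from your paragraphs one and two. Note finally that your stated goal for (iii) --- that repeated twisting ``connects every independent cycle of $\check G$ to $\delta$'' --- proves too much: the puncture loops $\delta_{i,0}$ are cycles of the retract which by \eqref{18122021toulouse} have nonzero residues, hence do not lie in $W\supseteq O_\delta$. The orbit is a proper subspace of codimension exactly $d$, and identifying \emph{which} codimension-$d$ subspace it is requires the explicit generators (a) and (b), not a connectivity argument.
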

\begin{proof}
 Let $S\subset H_1(L_b,\Q)$ be the $\Q$-vector space generated by saddle vanishing cycles. We first compute the action of monodromy in $H_1(L_b,\Q)/S$. For this we prove that all center vanishing cycles are in $O_\delta$.  Consider center vanishing cycles  $\delta=\delta_{c_1}$, $\delta_{c_2}$,  the critical points $p_1,p_2$ with $f(p_i)=c_i$  which are inside two adjacent polygons  $P_1$ and $P_2$ formed by the lines $l_i=0$. Let $l_1$ be the line of the common  edge which has multiplicity $n_1$. We are in the situation of Theorem \ref{10dec2021}. The restriction of the map $f$ to $\Sigma_0$ in a local coordinate $z$ in $\Sigma_0$ is given by $z\mapsto z^{n_1}$. Let $p_0,\tilde p_0$ be two points in the real transversal section $\Sigma_0$ in $\R^2$, $p_0$ above and $\tilde p_0$ under the line $l_1$, and $\lambda$ as in before Theorem \ref{10dec2021}. The image of $\lambda$ under $f|_{\Sigma_0}$ is a path which starts at $f(p_0)$ and turns $\frac{n_1}{2}$ times around $0\in\C$. The conclusion is that $h_0^{[\frac{n_1}{2}]+\epsilon}(\delta_{c_1})$ has a non-zero intersection with $\delta_{c_2}$, where $\epsilon=0$ if $f(p_0)>0$ and $\epsilon=1$ if $f(p_0)<0$. Using the classical Picard-Lefschetz formula, see Theorem \ref{12nov2021}, we conclude that $\delta_{c_2}\in O_{\delta}$. Further applications of Picard-Lefschetz formula  will imply that all center vanishing cycles are in $O_\delta$.
 
Our hypothesis on $n_i$'s implies that over the point $l_i\cap l_j$ we have exactly one saddle vanishing cycle.
For a finite polygon in the complement of $\cup_{i=1}^{d+1}l_i$ in $\R^2$, let $a_1,a_2,\ldots,a_s$ be the multiplicity of its edges formed by the lines $l_{1},l_2,\ldots, l_s$. Let also $\delta$ be the center vanishing cycle inside this polygon.  We look $\delta$ in the deformation retract $G$ of $L_b$ in Proposition \ref{11dec2021}. The monodromy $h^{a_1a_2\cdots \hat{a_i}\cdots a_s}(\delta)$, where $\hat{a_i}$ means $a_i$ is removed, fixes all the edges of $\delta_i$ except for the $i$-the edge, and its iteration will replace its $i$-th edge with any other $a_i$ paths in the deformation retract of $L_b$. Moreover, any path in $H_1(G,\Z)/S$ is a linear combination of center vanishing cycles. 
The conclusion is that the action of monodromy on $H_1(L_b,\Q)/S$ generates the whole space. By the classical Picard-Lefschetz formula we have
\begin{equation}
\label{12dec2021}
h^{a}(\delta)=\delta+\sum_{i=1}^s\frac{a}{a_ia_{i+1}}\delta_{i,i+1},\ \ \ a:=a_1a_2\cdots a_s,\ \ \  s+1:=1,
\end{equation}
where $\delta_{i,i+1}$ is the saddle vanishing cycle over $l_i\cap l_{i+1}$. It follows that by the action of monodromy  we can generate a sum  of saddle vanishing cycles as above. It is easy to see that these elements are linearly independent in $H_1(L_b,\Q)$.  The codimesnion in $S$ of the $\Q$-vector space generated by these elements is exactly $d$.  

Let $\delta_{i,h},\ i=1,2,\ldots,d+1,\ h=1,2,\ldots, (n,n_i)$ be the closed cycles around the points at infinity $p_{i,h}$ of $L_b$ corresponding to the intersection of $l_i$ with the line at infinity. An easy residue calculation shows that 
\begin{equation}
\label{18122021toulouse}
\int_{\delta_{j,h}}\frac{dl_i}{l_i}=\left\{
\begin{array}{cc}
\frac{n-n_j}{(n,n_j)}     &  i=j \\
 \frac{-n_j}{(n,n_j)}     &   i\not= j
\end{array}.
\right.
\end{equation}
This shows that cohomology classes of the $d+1$ logarithmic one-forms $\frac{dl_i}{l_i}$ in $H^1_{dR}(L_b)$
generate a vector space of dimension $d$ (there is one linear relation between these forms restricted on $L_b$). The equality \eqref{18122021france} follows, as both sides of the equality  are of codimension $d$ and $\subseteq$ is trivially true. Moreover, by \eqref{18122021toulouse} we have $\delta_{i,h}-\delta_{i,0}\in O_{\delta},\ \ i=1,2,\ldots,(n,n_i)-1$ and $H_1(L_b,\Q)$ is a direct sum of $O_\delta$ with the the $\Q$-vector space generated by $\delta_{i,0},\ i=1,2,\ldots,d$.    
%Then for fixed $i$ the intergrals $\int_{\delta_{i,h}}\tilde\omega_1$ are constants and equal. 
%We know that for arbitrary $\lambda_i\in\C$ the differential $1$-form $\sum_{i=1}^{d+1}\lambda_i\frac{dl_i}{l_i}$ integrated over $\delta$, and hence, all the elements of $O_\delta$ is zero. This implies that $H_1(L_t,\Q)$ is a direct sum of $O_\delta$ with the $\Q$-vector space of dimension $d$ generated by cycles around the removed points of $L_t$ at infinity. This finishes the proof. 
\end{proof}
\begin{remark}\rm
A purely topological argument for the last part of the proof of Theorem \ref{camacho2021} can be formulated following \cite[Section 2]{mov} and it is as follows. 
Let us orient the center vanishing cycles using the anticlockwise orientation of $\R^2$. We can orient the saddle vanishing cycle $\delta$ attached to $l_i\cap l_j$ in such a way that  it intersects positively the center vanishing cycles in the finite polygons with $l_i\cap l_j$ vertex. For any line $l_i$, let $\delta^i$ be the alternative sum of saddle vanishing cycles in the order which $l_i$ intersects others. It turns out that  the intersection of $\delta^i$'s with center vanishing cycles is zero.  Since it is invariant under monodromy $h_0$ around $0$, its intersection with all $h^{k}(\delta)$, $\delta$ center vanishing cycle, is also zero. The conclusion is that the intersection of $\delta^i$ with all the elements in $H_1(L_b,\Z)$ is zero and hence it is in the kernel of  $H_1(L_b, \Q)\to  H_1( \overline{L_b},\Q)$. After taking a proper sign for $\delta^i$, we know that $\sum_{i=1}^{d+1}\delta^i=0$. Now, it is an elementary problem to check that $\delta^i,\ i=1,2,\ldots,d$ and $\frac{d(d-1)}{2}$ elements \eqref{12dec2021} attached to each polygon are linearly independent and form a basis for the vector space generated by saddle vanishing cycles.   
\end{remark}

\section{Proof of Theorem \ref{main1}}
\label{14/12/2021}
Let $\C[x,y]_{\leq 1}$ be the complex vector space of bi-variate complex polynomials of degree at most one.
The set ${\mathcal L}(1^{d+1})$ of logarithmic foliations (\ref{omicron2021}), is parameterized by the map
\begin{equation}
\label{09052022ivan}
\tau:\C^{d+1}\times \C[x,y]_{\leq 1}^{d+1}\rightarrow\F(d)
\end{equation}
\begin{equation}
\label{29apr02}
\tau(\lambda_1,\ldots,\lambda_{d+1},l_1,\ldots,l_{d+1})= 
l_1\cdots l_{d+1}\sum_{i=1}^{d+1} \lambda_i\frac{dl_i}{l_i}
\end{equation}
and  hence it is an irreducible algebraic set. The  differential 
\begin{align}
\label{diff}
D \tau (n_1,\dots,n_{d+1},l_1,\ldots,l_{d+1})
\end{align}
of $\tau$  at the point 
\begin{align}
\label{omicron0}
\omega_0 = l_1 l_2\cdots l_{d+1} (\sum_{i=1}^{d+1} n_i\frac{dl_i}{l_i})\in {\mathcal L}(1^{d+1})
\end{align}
applied to the vector $ (\lambda_1,\ldots,\lambda_{d+1},p_1,\ldots,p_{d+1})$
is
\begin{equation}
    \label{1300toulouse}
l_1l_2\cdots l_{d+1}\left\{ \sum_{i=1}^{d+1} \lambda_i\frac{dl_i}{l_i} +  
\left(\sum_{i=1}^{d+1}\frac{p_i}{l_i}\right)\left(\sum_{i=1}^{d+1} n_i\frac{dl_i}{l_i}\right)+ d\left(\sum_{i=1}^{d+1}n_i\frac{p_i}{l_i}\right) 
\right\}.
\end{equation}
It is easy to check that if a vector $(\lambda_1,\ldots,\lambda_{d+1},p_1,\ldots,p_{d+1})$ is in the kernel of $D\tau$ then for every $i$ the polynomial $p_i$ is colinear to $l_i$. It follows from this that the dimension of the kernel is $d+1$, respectively the dimension of the image (\ref{1300toulouse}) is $3(d+1)$.

Let $\omega_0$ be the polynomial one-form defined by (\ref{omicron0}) which we assume to be with real coefficients. We denote by 
$\delta_t \subset \{f=t\}$ a continuous  family of real vanishing cycles around a real center of 
$\omega_0$, where the parameter $t$ is the restriction of the first integral $f= \Pi_{i=1}^{d+1} l_i^{n_i}$ to a cross-section to $\{f=t\}$.
Let 
\begin{equation}
\label{05.12.2021}
\F_\epsilon: \ \ \omega_\epsilon:=\omega_0+\epsilon\omega_1+\cdots  
\end{equation}
be an arbitrary degree $d$ deformation of $\F_0$. The return map associated to the family $\delta_t$ and the deformation $\F_\epsilon$ takes the form
$$
t + \varepsilon \int_{\delta_t} \tilde\omega_1 + O(\varepsilon), \;\; \tilde\omega_1=\frac{\omega_1}{l_1\cdots l_{d+1}}
$$
\begin{proposition}
\label{tangent}
The Melnikov integral $M_1(t):= \int_{\delta_t} \frac{\omega_1}{l_1\cdots l_{d+1}}$ vanishes identically if and only if $\omega_1$ 
 is of the form (\ref{1300toulouse}).
\end{proposition}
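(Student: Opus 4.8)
The plan is to prove both implications of the equivalence, with the ``if'' direction being a direct computation and the ``only if'' direction being the substantive part. For the easy direction, suppose $\omega_1$ is of the form (\ref{1300toulouse}), i.e. $\omega_1/(l_1\cdots l_{d+1})$ equals the bracketed expression
$$
\eta := \sum_{i=1}^{d+1}\lambda_i\frac{dl_i}{l_i} + \left(\sum_{i=1}^{d+1}\frac{p_i}{l_i}\right)\left(\sum_{i=1}^{d+1}n_i\frac{dl_i}{l_i}\right) + d\left(\sum_{i=1}^{d+1}n_i\frac{p_i}{l_i}\right).
$$
I would show each of the three summands integrates to zero over $\delta_t$. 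The third term is an exact one-form $dg$ with $g=\sum_i n_i p_i/l_i$ a function on $L_t$, so its integral over a cycle vanishes. The first and second terms are, modulo exact forms, logarithmic forms whose periods over $\delta_t$ I can control: since $\delta_t$ is a \emph{vanishing} cycle, it bounds in a neighborhood of the center, and more importantly by Theorem \ref{camacho2021} the center vanishing cycle $\delta$ satisfies $\int_\delta dl_i/l_i=0$ for all $i$ (it lies in the space $O_\delta$ of (\ref{18122021france})). The second term, after expanding, is a combination of $\frac{dl_i}{l_i}$ and forms $\frac{p_i}{l_i}\cdot\frac{dl_j}{l_j}$; I would argue these combine with the first term to a one-form whose restriction to $L_t$ is cohomologous to a combination of the $dl_i/l_i$, all of which have vanishing $\delta_t$-period.

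The harder ``only if'' direction asserts that the Melnikov function detects \emph{exactly} the tangent space to ${\mathcal L}(1^{d+1})$, and this is where Theorem \ref{camacho2021} does the decisive work. The strategy is to view $M_1(t)=\int_{\delta_t}\tilde\omega_1$ as a period of the relative cohomology class of $\tilde\omega_1$ along the family $\delta_t$, and to use the monodromy action. The key observation is that $M_1(t)\equiv 0$ means $\tilde\omega_1$ integrates to zero not merely over $\delta_t$ but, by monodromy invariance of the identically-zero function, over the entire orbit $O_\delta$ of $\delta$ under the monodromy group. By Theorem \ref{camacho2021}, this orbit $O_\delta$ has codimension exactly $d$ in $H_1(L_b,\Q)$, is explicitly the common kernel of the $d+1$ logarithmic periods $\int_\gamma dl_i/l_i$ (with one relation, giving effective rank $d$), and surjects onto $H_1(\overline{L_b},\Q)$. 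The surjectivity onto the homology of the \emph{compactified} curve is what I expect to be the linchpin: it forces the cohomology class of $\tilde\omega_1$, restricted to $\overline{L_b}$, to vanish, so $\tilde\omega_1$ extends to a form on $\overline{L_b}$ with no periods, hence is exact on the compact curve up to the logarithmic poles along the lines at infinity and along the $l_i=0$.

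From the vanishing of all periods of $\tilde\omega_1$ over $O_\delta$ together with the description (\ref{18122021france}), I would conclude that the class of $\tilde\omega_1$ in $H^1_{dR}(L_b)$ lies in the span dual to the complement of $O_\delta$, which by the theorem is precisely the $d$-dimensional space spanned by the logarithmic forms $dl_i/l_i$ (subject to their single linear relation). Concretely, the vanishing of $M_1$ forces $\tilde\omega_1$ to be, up to an exact form $dg$, a $\C$-linear combination $\sum_i \mu_i \, dl_i/l_i$. The final step is a bookkeeping argument: I must match this cohomological normal form against the explicit shape (\ref{1300toulouse}) and check that every such $\tilde\omega_1$ is realized by an admissible choice of $(\lambda_i,p_i)$, i.e. that $dg$ and the combination of logarithmic forms can be rewritten in the three-term structure of the differential $D\tau$. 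Here I would use the pole structure: $\tilde\omega_1=\omega_1/(l_1\cdots l_{d+1})$ has at most simple poles along each $l_i$ with polynomial residues, and matching residues line by line recovers the polynomials $p_i$ (colinear freedom corresponding exactly to the $(d+1)$-dimensional kernel of $D\tau$ already noted), while the logarithmic part fixes the $\lambda_i$.

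The main obstacle is the transition from ``$M_1\equiv 0$'' to ``$\tilde\omega_1$ is cohomologically a combination of $dl_i/l_i$ plus an exact form.'' This requires the surjectivity onto $H_1(\overline{L_b})$ in Theorem \ref{camacho2021}, which is exactly the statement needing pairwise coprimality of the $n_i$, and it requires knowing the dimension of the relevant de Rham space matches the codimension $d$ of $O_\delta$ so that the orthogonality determines the class uniquely up to the logarithmic span. The delicate point is confirming that the pairing between $O_\delta$ and the space of admissible $\tilde\omega_1$ (Melnikov one-forms $\omega_1/(l_1\cdots l_{d+1})$ coming from degree-$d$ deformations) is nondegenerate modulo the tangent space, so that no ``phantom'' deformations with vanishing Melnikov integral escape the form (\ref{1300toulouse}).
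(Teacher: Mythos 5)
Your opening moves coincide with the paper's: from $M_1\equiv 0$ you pass to vanishing of $\int\tilde\omega_1$ on the whole monodromy orbit $O_\delta$, and by Theorem \ref{camacho2021} you conclude that the class of $\tilde\omega_1$ in $H^1_{dR}$ of a fiber lies in the span of the $\frac{dl_i}{l_i}$. (Two small corrections here: what this step actually uses is the codimension-$d$ statement together with \eqref{18122021france} and the perfectness of the period pairing, not the surjectivity of $O_\delta\to H_1(\overline{L_b},\Q)$, which you call the linchpin; and your claim that $\tilde\omega_1$ ``extends to a form on $\overline{L_b}$'' is false as stated, since $\tilde\omega_1$ has poles on the lines and at infinity.) The first genuine gap comes right after: you work on a \emph{single} fiber $L_b$, whereas the conclusion is about the polynomial one-form $\omega_1$ on $\C^2$, so one needs the decomposition $\tilde\omega_1|_{L_t}\sim\sum_i\lambda_i(t)\frac{dl_i}{l_i}$ on \emph{all} fibers with coefficients $\lambda_i$ \emph{independent of} $t$. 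The paper proves this constancy by a non-trivial analytic argument which has no counterpart in your proposal: by \eqref{18122021toulouse} each $\lambda_i(t)$ is a fixed rational combination of the periods $\int_{\delta_{i,0}}\tilde\omega_1$; these are univalued in $t$ and grow at worst like $t^{-1/n_i}$ as $t\to 0$ and like $t^{1/n_i}$ as $t\to\infty$, hence the $\lambda_i(t)$ are meromorphic on $\P^1$ with pole order $<1$ at $0$ and $\infty$, and therefore constant.

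The second gap is the final ``bookkeeping argument,'' which is in fact where most of the work lies, and residue matching along the lines cannot replace it. The paper's route is: cohomological triviality on every fiber gives \emph{relative exactness}, $\tilde\omega_1-\sum_i\lambda_i\frac{dl_i}{l_i}=d\tilde P+\tilde Q\tilde\omega_0$ as in \eqref{5dec2021} (this is \cite[Theorem 4.1]{mov0}), with $\tilde P,\tilde Q$ having simple poles along the $l_i$ and at infinity; the degree bound $\deg\omega_1\le d$ forces the left side to be logarithmic at infinity, whence $\tilde P,\tilde Q$ are holomorphic there and $P=l_1\cdots l_{d+1}\tilde P$, $Q=l_1\cdots l_{d+1}\tilde Q$ are polynomials of degree $\le d+1$; reducing \eqref{5dec2021} modulo $l_i$ gives $l_i\mid P-n_iQ$, and --- this is where the hypothesis that the $n_i$ are \emph{distinct} enters, a point your proposal never touches --- $P$ and $Q$ must then vanish at every intersection point $l_i\cap l_j$, which pins down the ansatz $P,Q\in l_1\cdots l_{d+1}\bigl\{\sum_i a_i\frac{p_i}{l_i}\bigr\}$ and, after substitution, yields exactly \eqref{1300toulouse}. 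Note in particular that the term $\tilde Q\tilde\omega_0$ is invisible fiberwise (it restricts to zero on each $L_t$, since $\tilde\omega_0=df/f$) yet is indispensable in the global identity; this is precisely why an argument confined to one fiber plus residues along the $l_i$ cannot rule out ``phantom'' deformations, i.e.\ cannot close the proof.
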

\begin{proof}
It is trivially seen that if $\omega_1$ belongs to the image of $D\tau$ (\ref{1300toulouse}), then $\int_{\delta_t} \tilde\omega_1$
vanishes identically. We shall prove now the converse.  If $\int_{\delta_t} \tilde\omega_1$ vanishes  then it vanishes on every other family of cycles which are in the orbit of $\delta_t$, and hence on the vector space $O_{\delta_{t}}$ spanned by the orbit.  By Theorem \ref{camacho2021} the dual of $O_\delta$ in $H_{dR}^1(L_t)$ has a basis given by $\frac{dl_i}{l_i},\ i=1,2,\ldots, d$. Therefore, there are unique constants $\lambda_i,i=1,2,\ldots,d$ (depending on $t$) such that 
the cohomology class of
\begin{align}
 \tilde \omega_1- \sum_{i=1}^{d} \lambda_i \frac{dl_i}{l_i}
\end{align}
in $H^1_{DR}(L_t)$ is zero. Using \eqref{18122021toulouse}, we can see that $\lambda_i$ is a linear combination of integrals $\frac{1}{2\pi i}\int_{\delta_{i,0}}\tilde\omega_1,\ \ i=1,2,\ldots,d$ with rational, and hence constant, coefficients. We assume that $n_1,n_2,\ldots, n_d>1$.  Multiplying
$\int_{\delta_{i,0}}\tilde\omega_1$ with  $t^{\frac{1}{n_i}}$ and $t^{\frac{-1}{n}}$ we can see that it grows like $t^{-\frac{1}{n_i}}$ as $t\to 0$ and like  $t^{\frac{1}{n_i}}$ as $t^{-1}\to 0$. Since $\lambda_i$'s are uni-valued, it follows that they are meromorphic function in $t$ with pole order $<1$ at $t=0,\infty$ and hence they are constant.  

With the same arguments as in  \cite[Theorem 4.1]{mov0} we deduce that if a one-form on $L_t$ is co-homologically zero, then it is relatively exact, that is to say
\begin{equation}
\label{5dec2021}
\tilde\omega_1-\sum_{i=1}^{d+1} \lambda_i\frac{dl_i}{l_i}=d\tilde P+\tilde Q\tilde \omega_0, 
\end{equation}
where $\tilde P$ and $\tilde Q$ have only poles of order $\leq 1 $  along the lines $l_i=0$ and the line at infinity and $\tilde\omega_0=\frac{\omega_0}{l_1l_2\cdots l_{d+1}}$. 
The crucial observation is that the one-form (\ref{5dec2021}) is logarithmic along the line at infinity (after compactifying $\C^2$ to $\P^2$). Namely,   $\omega_1$ is of (affine) degree $\leq d$ which implies   $\tilde \omega_1$ and $d\tilde\omega_1$ have a pole of order at most one along the infinite line of $\P^2$. This implies that $d\tilde Q\wedge \tilde\omega_0$ has pole order $\leq 1$ at infinity, and hence $\tilde Q$ is holomorphic at infinity, and  by the equality \eqref{5dec2021}, $\tilde P$ is also holomorphic at infinity. The conclusion is that we can write 
$$
\tilde P=\frac{P}{l_1l_2\cdots l_{d+1}}, \ \ \tilde Q:=\frac{Q}{l_1l_2\cdots l_{f+1}},
$$
where $P,Q\in\C[x,y]$ are polynomials of degree $\leq d+1$. Multiplying the equality \eqref{5dec2021} with $l_1l_2\cdots l_{d+1}$ and considering it modulo $l_i=0$, we get $l_i|P-Qn_i$. If $n_i\not=n_j$ this implies that $P$ and $Q$ vanishes in the intersection points $l_{i}\cap l_j$. Knowing the degree of $P$ and $Q$, we conclude that both $P$ and $Q$ are of the form $l_1l_2\cdots l_{d+1}(\sum_{i=1}^{d+1}a_i\frac{p_i}{l_i})$, where $\deg(p_i)\leq 1$ and $a_i\in\C$ depend on $P,Q$. Substituting this ansatz for $P$ and $Q$ in \eqref{5dec2021} we get the desired form of $\omega_1$ in \eqref{1300toulouse}.
\end{proof}

The geometric meaning of the above Proposition  is that the tangent space of ${\mathcal L}(1^{d+1})$ and $\M(d)$ at the point $\F_0$ are the same, and that they are given by (\ref{1300toulouse}). By dimension count the dimension of this tangent space equals the dimension of ${\mathcal L}(1^{d+1})$. Therefore $\F_0$ is a smooth point on
 $\M(d)$ and moreover ${\mathcal L}(1^{d+1})$ is an irreducible component of the center set $\M(d)$.
Note that there are no other components of $\M(d)$ containing $\F_0$ and tangent to ${\mathcal L}(1^{d+1})$, otherwise the Zariski tangent space would be bigger.
  \begin{remark}
  \label{rem6}
  \rm
  Assuming Corollary \ref{tangent} we may complete the proof of Theorem \ref{main1} by general arguments which we sketch in what follows, 
  see \cite{gavr20}.
 Let $\mathcal B = (b_1,b_2,\dots, b_N)$
 be generators of the Bautin ideal near the point $\F_0\in \F(d)$ defined in Theorem \ref{main1}. Here $\mathcal B$ is seen as an ideal of the local ring of convergent power series. Following \cite[section 3.2]{gavr20} we divide the return map $P(t)$ associated to the family of vanishing cycles $\delta_t$ of $\F_0$ under a general (multi-parameter) perturbation in $\F(d)$ of the form
 $\omega_0+ \omega_1$  to obtain
 $$
 P(t)-t= \sum b_i(\Phi_i(t) + \dots).
 $$
 Here the dots replace some convergent power series in the parameters and $t$, which vanish at $\F_0$.
 Every Melnikov function (of arbitrary order) is an appropriate linear combination of $\Phi_i(t)$, although the converse is not necessarily true. In particular we may suppose that the vector space of 
 first order (or linear) Melnikov function has a basis formed by $\Phi_i(t), i=1,\dots, k$. The linear independence of the first order Melnikov function is equivalent then to the linear independence of the differentials $D b_i , i=1,\dots, k$ computed at $\F_0$,  see \cite[Corollary 2]{gavr20}.
%\marginpar{\tiny why no-zero differential? : the zero differentials correspond to higher order Melnikov functions }
  Thus, performing a local bi-analytic change of variables in the parameter space, we may assume that $b_1, b_2, \dots, b_k$ are new parameters defining a plane which contains ${\mathcal L}(1^{d+1})$ near $\F_0$. By comparison of dimensions we conclude that ${\mathcal L}(1^{d+1})$  coincides to the plane $\{b_1=\dots=b_k=0\}$  near $\F_0$. As for the other generators $b_i, i>k$ they already belong to the ideal $(b_1,\dots,b_k)$, because they vanish identically along 
 ${\mathcal L}(1^{d+1})$. Thus $\M(d)$ coincides near $\F_0$ with  ${\mathcal L}(1^{d+1})$.
  \end{remark}
  
 \begin{remark}\rm
Our hypothesis in Theorem \ref{main1} suggests to study the subset of $(\Q^+)^{d}$ given by points $(\frac{n_1}{n_{d+1}}, \frac{n_2}{n_{d+1}},\ldots, \frac{n_d}{n_{d+1}})$, where $n_i$'s are pairwise relatively prime positive integers. For instance, it is not clear whether this set is dense in $(\Q^+)^{d}$ or not. Note that its projection in each coordinate is dense and the fibers of this projection are finite sets. 
\end{remark}

\section{Quadratic foliations}

For quadratic foliations, that is the case $d=2$, the classification of components of  $\M(2)$ follows from the computations of  H. Dulac in \cite{Dulac1923}, see \cite[Appendix A]{gavr20}, \cite[Theorem 1.1]{LinsNeto2014} and \cite[Section 13.9]{IlyashenkoYakovenko}.   The algebraic set $\M(2)$ has four components 
\begin{enumerate}
\item
${\mathcal L}(1^3)$, 
\item
 the set ${\mathcal L}(1,2)$ of logarithmic foliations of the form 
 $$f_1f_2(\lambda_1\frac{df_1}{f_1}+\lambda_2\frac{df_2}{f_2}),\ \deg(f_1)=1,\deg(f_2)=2, \lambda_1,\lambda_2\in\C-\{0\}$$ 
\item
 the set ${\mathcal L}(3)$ of Hamiltonian foliations $\F(df),\ \deg(f)=3$ 
\item  an exceptional  component obtained by the action of yhe affine group $\Aff(\C^2)$ on the foliation with the first integral $\frac{(x^2+2y+\alpha)^3}{(x^3+3xy+1)^3}, \ \alpha\in\P^1$
\end{enumerate}
see  \cite[Proposition 4.7]{Gavrilov2020}. 
Using this, one may prove the following:
the only singular points of  $\M(2)$ in ${\mathcal L}(1^3)$ are ${\mathcal L}(1^3)\cap {\mathcal L}(1,2)$ and ${\mathcal L}(1^3)\cap {\mathcal L}(3)$, that is,
\begin{equation}
\label{09052022toulouse}
{\rm Sing} \;\M(2)\cap {\mathcal L}(1^3)=\left({\mathcal L}(1,2)\cap {\mathcal L}(1^3)\right)\cup\left(  {\mathcal L}(3)\cap {\mathcal L}(1^3)\right).
\end{equation}
A finer result is the classification of the components of the Bautin scheme which is done by many authors and for many subspaces of $\F(2)$, see \cite{Zoladek94} and references therein for an overview of this. Following \cite{IlyashenkoYakovenko} we consider the following normal form of quadratic systems with a Morse center at the origin
\begin{equation}
 \left\{ 
 \begin{array}{ccc}
 \dot x     & = & -ix+Ax^2+Bxy+Cy^2, \\
  \dot y    &  =   & iy+C'y^2+B'xy+A'x^2,
 \end{array}
 \ A,B,C,A',B',C' \in\C.
\right. 
\end{equation}
%\underline{The second equation should be $\dot y    =  iy+C'y^2+B'xy+A'x^2$}
The Bautin ideal of the above system  has been extensively studied in the literature, see \cite{Zoladek94,IlyashenkoYakovenko}. The Bautin ideal associated  is generated by $ g_2,g_3,g_4 $, where  
\begin{eqnarray*}
g_2&:=&AB-A'B',\\  
g_3&:=& (2A+B')(A-2B')CB'-(2A'+B)(A'-2B)C'B,\\
g_4&:=& (BB'-CC')((2A+B')B'^2C-(2A'+B)B^2C'). 
\end{eqnarray*}
The computation of the primary decomposition of this ideal implies four reduced components which are explicitly written in \cite[Theorem 1]{Zoladek94}: 
\begin{enumerate}
    \item Lotdka-Volterra component ${\mathcal L}(1^3)$: $B=B'=0$,
    \item Hamiltonian ${\mathcal L}(3)$: $2A+B'=2A'+B=0$,
    \item Reversible ${\mathcal L}(1,2)$: 
    $AB-A'B'=B'^3C-B^3C'=AB'^2C-A'B^2C'=A^2B'C-A'^2BC'=A^3C-A'^3C'$, 
    \item Exceptional: $A-2B'=A'-2B'=CC'-BB'=0$. 
\end{enumerate}  
Note that the ideal of the reversible component is radical and is written in a Groebner basis (in contrast to 
\cite[section 13]{IlyashenkoYakovenko} where the corresponding "symmetric" component turns out to be  reducible).
We can also compute the ideal of its singular set. It is clear that the Hamiltonian and Lotka-Volterra components are smooth and the exceptional component has an isolated singularity at $A=\cdots=C'=0$. The reversible component has more interesting singularities: 
$$
{\rm Sing}({\mathcal L}(1,2))=\{B=B'=A=A'=0\}
={\mathcal L}(3)\cap {\mathcal L}(1^3).
$$
The foliation $\F$ with $A=B=A'=B'=0,C=C'=1$  has the first integral $f:=(\frac{1}{2}-x)(y^2-\frac{1}{3}(x+1)^2)$,  see \cite[page 159]{Iliev98}. For the computer codes used in this computation see the latex file of the present article in arxiv. 
%{\tiny
%\begin{verbatim}
%//Ilyasheko-Yakovenko notation
%LIB "primdec.lib"; LIB "dmodapp.lib"; 
%ring r=(0,i), (A, Ap, B,Bp, C, Cp),dp;  minpoly =i^2+1;
%poly g2=(A*B-Ap*Bp); poly g3=(2*A+Bp)*(A-2*Bp)*C*Bp-(2*Ap+B)*(Ap-2*B)*Cp*B;
%poly g4=(B*Bp-C*Cp)*((2*A+Bp)*Bp^2*C-(2*Ap+B)*B^2*Cp); 
%ideal I=g2,g3,g4; primdecGTZ(I);
%ideal II=A*B-Ap*Bp, Bp^3*C-B^3*Cp, A*Bp^2*C-Ap*B^2*Cp, A^2*Bp*C-Ap^2*B*Cp, A^3*C-Ap^3*Cp; 
%def CA = charInfo(II); 
%setring CA;
%primdecGTZ(singLoc);
%\end{verbatim}
%}
It must be noted that ${\mathcal L}(1^3)$ itself is not smooth, for instance, it has a nodal singularity at the foliation with the first integral
$\frac{x^2+y^2}{2y-1}$ which has been studied in \cite{FrancoiseGavrilov2020}. 
\begin{proposition}
 The singular set of ${\mathcal L}(1^3)$ is  the orbit of the affine group $\Aff(\C^2)$ on the foliation with the first integral $\frac{(x+1)(y-1)}{xy}$.  
 \end{proposition}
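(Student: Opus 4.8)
The plan is to use the explicit parametrization $\tau$ of \eqref{29apr02} together with the $\Aff(\C^2)$-equivariance of the whole construction (and the scaling $\omega\mapsto c\omega$, under which ${\mathcal L}(1^3)$ is a cone). Since ${\mathcal L}(1^3)$ and $\tau$ are equivariant, the set $\mathrm{Sing}\,{\mathcal L}(1^3)$ is invariant under this group, hence a union of orbits, and it suffices to identify them. First I would note that along the locus of distinct lines with nonzero residues the differential \eqref{1300toulouse} has constant rank: its kernel is exactly the $(d+1)$-dimensional space $p_i\parallel l_i$, so $\tau$ is an immersion there and $\dim{\mathcal L}(1^3)=9$ in $\F(2)=\C^{12}$. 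Thus the singularities of the image do not come from a rank drop of $D\tau$ but from \emph{self-intersections}: $\F_0$ is singular exactly when its geometric fibre is larger than one scaling orbit, i.e. when $\F_0$ carries two essentially distinct invariant triangles whose associated logarithmic foliations (for suitable residues) both equal $\F_0$.

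Next I would verify that $\F_*$, with first integral $\tfrac{(x+1)(y-1)}{xy}$, is of this type. Written as the quadratic field $x(x+1)\partial_x+y(y-1)\partial_y$, it has the five invariant lines $x,\,x+1,\,y,\,y-1,\,y-x-1$ and carries two genuinely distinct three-line logarithmic first integrals,
\[
\frac{xy}{\,y-x-1\,}\qquad\text{and}\qquad\frac{(x+1)(y-1)}{\,y-x-1\,},
\]
attached respectively to the triangles $\{x,y,y-x-1\}$ and $\{x+1,y-1,y-x-1\}$ (their ratio being $\tfrac{(x+1)(y-1)}{xy}$). Deforming each triangle and its residues through $\tau$ yields two smooth $9$-dimensional sheets of ${\mathcal L}(1^3)$ through $\F_*$; I would check that the two $9$-planes $\mathrm{Im}\,D\tau$ are distinct, so the sheets cross and the Zariski tangent space at $\F_*$ has dimension $>9$. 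Hence $\F_*$ is a singular (nodal) point, consistent with the node at $\tfrac{x^2+y^2}{2y-1}$ of \cite{FrancoiseGavrilov2020}, which lies in the same orbit. Equivalently, in the Melnikov language of the paper, for the residue pattern $(1,1,-1,-1)$ the monodromy orbit $O_\delta$ is too small to cut the tangent space down to \eqref{1300toulouse}, which is precisely the failure of the coprimality hypothesis of Theorem \ref{main1}.

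The core of the argument, and the step I expect to be the main obstacle, is the converse: that $\F_*$ is, up to $\Aff(\C^2)$ and scaling, the \emph{only} member of ${\mathcal L}(1^3)$ with two coincident triangle-representations. I would organize this as a finite classification of the invariant-line configurations of a quadratic Lotka--Volterra foliation. A generic member has exactly three invariant lines and a residue vector unique up to scale, hence a single representation and is smooth; the singular locus therefore lies in the proper subvariety of members possessing a fourth invariant line. Determining when such a fourth line exists, and when a triangle formed from the extra lines carries residues making its logarithmic foliation equal to the original, should force both the residue relations and the combinatorial pattern ``parallelogram plus one diagonal'', i.e. the affine type of $\F_*$. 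The delicate part is to show that all remaining strata are smooth points of ${\mathcal L}(1^3)$ --- in particular configurations with a vertex at infinity (two parallel invariant lines), with additional lines at infinity, or with a degenerating residue --- so that no extra orbit appears in $\mathrm{Sing}\,{\mathcal L}(1^3)$.

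In practice I would carry out this classification by the computer-algebra route already used in this section: eliminate the parameters from $\tau$ to obtain the ideal of ${\mathcal L}(1^3)$ in a convenient affine chart of $\F(2)$, form the singular ideal from the maximal Jacobian minors, and compute its primary decomposition. The analysis above predicts a single nontrivial component of the expected dimension ($\le 7$), which one then identifies with the orbit of $\F_*$ by exhibiting the affine change of coordinates carrying $\tfrac{(x+1)(y-1)}{xy}$ to a general member. The conceptual input of the first two paragraphs is what guarantees the computation is interpretable and that the component is exactly the claimed $\Aff(\C^2)$-orbit; the practical obstacle is the size of the Gr\"obner computation and the uniform treatment of the boundary strata.
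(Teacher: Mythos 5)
Your first two paragraphs reproduce, correctly, the paper's own opening move: since the kernel of $D\tau$ has constant dimension $d+1$ along the locus of distinct lines and nonzero residues, the map $\tau$ has constant rank, so singular points of ${\mathcal L}(1^3)$ can only arise from non-injectivity of $\tau$ modulo its generic fibre, i.e.\ from foliations admitting two essentially distinct triangle representations. Your verification that the foliation $x(x+1)\partial_x+y(y-1)\partial_y$ with first integral $\frac{(x+1)(y-1)}{xy}$ carries the two triangles $\{x,y,y-x-1\}$ and $\{x+1,y-1,y-x-1\}$ is correct and is exactly the paper's observation that the two branches correspond to $F-1=\frac{-x+y-1}{xy}$ and $\frac{F-1}{F}=\frac{-x+y-1}{(x+1)(y-1)}$; your identification of $\frac{x^2+y^2}{2y-1}$ as lying in the same $\Aff(\C^2)$-orbit is also right. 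Up to this point you have the inclusion $\Aff(\C^2)\cdot\F_*\subseteq {\rm Sing}\,{\mathcal L}(1^3)$.

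The gap is the converse, which is the actual content of the proposition: you never prove that no \emph{other} orbit is singular. Your third paragraph says a second triangle ``should force'' the parallelogram-plus-diagonal pattern and flags the boundary strata as ``the delicate part''; your fourth paragraph delegates the whole question to a primary decomposition whose feasibility you yourself doubt. Neither route is carried out, so the uniqueness claim is unproved. What is missing is the short direct argument the paper uses at precisely this point: if $\F(\omega)$ has two triple representations $f=l_1l_2l_3$ and $g=\tilde l_1\tilde l_2\tilde l_3$ with $d(\omega/f)=d(\omega/g)=0$, then $F=f/g$ is a rational first integral, and the constraint $\deg\F=2$ pins down its shape. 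The two triples must share exactly one line (no common line would give six invariant lines and a pencil of cubics, forcing degree $>2$; two common lines would make $F$ a ratio of lines and the foliation of degree $\le 1$), so after cancellation $F$ is a ratio of two split conics. Moreover the pencil spanned by $l_1l_2$ and $\tilde l_1\tilde l_2$ must contain a member of degree one --- otherwise $g\,df-f\,dg$ has no linear common factor and the tangent foliation has degree $3$ --- so after rescaling $F$, the difference $l_1l_2-\tilde l_1\tilde l_2$ is a line; equivalently the two pairs of lines have matching directions, which is the parallelogram-plus-diagonal configuration, unique up to $\Aff(\C^2)$. Replacing your ``should force'' and the Gr\"obner computation by an argument of this kind is what the proof requires; as written, your proposal establishes one inclusion only.
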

 For an illustration of the above phenomenon see \cite[Figure 2]{FrancoiseGavrilov2020}.
\begin{proof}
We know that the kernel of the derivation of the parametrization $\tau$ in \eqref{09052022ivan} has constant dimension. This implies that all singularities of ${\mathcal L}(1^3)$ are due to the noninjectivity of $\tau$. For a foliation $\F=\F(\omega)\in {\rm Sing}({\mathcal L}(1^3))$ we get $f=l_1l_2l_3$ and $g=\tilde l_1\tilde l_2\tilde l_3$, where $\{l_i=0\}$'s (resp.  $\{\tilde l_i=0\}$'s) are distinct lines, such that $d(\frac{\omega}{f})=d(\frac{\omega}{g})=0$, and hence, $F:=\frac{f}{g}$ is a first integral of $\F$. It turns out that one of the lines $\{l_i=0\}$'s must be equal to one of $\{\tilde l_i=0\}$'s, and since $\F$ is of degree $2$,  $F$ is the quotient of two lines by another two lines. Further, $F-1$ is the quotient of a line with another two lines. We conlude that up to the action of $\Aff(\C^2)$, the foliation $\F$ has the first integral $F:=\frac{(x+1)(y-1)}{xy}$. Note that two branches of ${\mathcal L}(1^3)$ near $\F(\omega)$ correspond to $F-1=\frac{-x+y-1}{xy}$ and $\frac{F-1}{F}=\frac{-x+y-1}{(x+1)(y-1)}$. 
\end{proof}

\def\cprime{$'$} \def\cprime{$'$} \def\cprime{$'$} \def\cprime{$'$}

\end{document}